\documentclass[review]{elsarticle}

\usepackage{amsmath}
\usepackage{latexsym, amssymb}
\usepackage{graphicx}
\usepackage{amsmath, amsbsy}
\usepackage{amsopn, amstext}
\usepackage{hyperref}
\usepackage{cancel, color}
\usepackage{epstopdf}

\def\J{{\bf 1}}

\DeclareMathOperator{\rank}{rank}

\DeclareMathOperator{\Span}{Span}

\DeclareMathOperator{\argmin}{argmin}
\DeclareMathOperator{\lcm}{lcm}

\def\cal{\mathcal}

\def\ra{\rightarrow}

\def\d{\delta}

\def\l{\lambda}

\def\0{{\bf 0}}

\newcommand{\R}{{\mathbb R}}

\newcommand{\Z}{{\mathbb Z}}

\def\dsum{\mathop{\sum}\limits}

\newtheorem{thm}{Theorem}[section]
\newtheorem{dfn}[thm]{Definition}
\newtheorem{prp}[thm]{Proposition}
\newtheorem{exa}[thm]{Example}
\newtheorem{lem}[thm]{Lemma}
\newtheorem{cor}[thm]{Corollary}
\newtheorem{rem}[thm]{Remark}
\newtheorem{alg}[thm]{Algorithm}

\begin{document}

\begin{frontmatter}
%\runtitle{Insert a suggested running title}  % Running title for regular
                                              % papers but only if the title
                                              % is over 5 words. Running title
                                              % is not shown in output.

\title{A Numerical Solution to KPD \tnoteref{footnoteinfo}}
% Title, preferably not more
                                                % than 10 words.

\tnotetext[footnoteinfo]{ This work is supported partly by the National Natural Science Foundation of China (NSFC) under Grant 62350037. }

\author[AMSS]{Daizhan Cheng}\ead{dcheng@iss.ac.cn}
%\thanks[CA]{Corresponding author. Tel. +86-10-6265 1445. Fax +86-10-6258 7343.}
%\author[AMSS,CAS]{Zhengping Ji}\ead{jizhengping@amss.ac.cn}

\address[AMSS]{Key Laboratory of Systems and Control, Academy of Mathematics and Systems Science, Chinese Academy of Sciences, Beijing 100190, P.R.China}
%\address[STPC]{Research Center of Semi-tensor Product of Matrices, Theory and Applications, Liaocheng University, Lianocheng, P.R. China}
%\address[CAS]{School of Mathematical Sciences, University of Chinese Academy of Sciences, Beijing 100049, P.R.China}

\begin{abstract}
A stationary value based algorithm (SVA) is provided to solve the nearest Kronecker product decomposition (KPD) problem of vector form hypermatrices. Using the algorithm successively,  the finite sum KPD is also solved.
Then the permutation matrix is introduced. Using it, the KPD of matrix form hypermatrices is converted to its equivalent KPD of vector forms, and then the SVA is also applicable to solve the same problems for vector form hypermatrix.
Some numerical examples are presented to demonstrate the new algorithm  and to compare it with existing methods.
\end{abstract}

\begin{keyword}
	Hypermatrix, Kronecker product decomposition (KPD), nearest Kronecker product (NKP), stationary value based algorithm (SVA).
\end{keyword}

\end{frontmatter}

\section{Introduction}

According to \cite{mar04},  the tensor decomposition was independently proposed by \cite{car70} and \cite{har70}. The problem is described as follows: Let ${\cal A}=(a_{i_1,\cdots,i_d})\in \R^{n_1\times\cdots\times n_d}$ be a hypermatrix, (which is also called a tensor), of order $d$ and dimension $n_1\times\cdots\times n_d$. Express it into a decomposition form as \cite{mar04}
\begin{align}\label{1.1}
{\cal A}=\dsum_{k=1}^r (x^k_1\otimes \cdots\otimes x^k_d),
\end{align}
where
$$
x^k_j\in \R^{n_j},\quad j\in [1,d],~k\in [1,r].
$$
The smallest $r$ is called the tensor rank of ${\cal A}$, denoted by
$$
\rank_t({\cal A}):=\argmin_{r}\{\mbox{decomposition~(\ref{1.1})~holds}\}.
$$

Recently, there has been renewed interest in this decomposition, because it has been found wide applications in many newly appearing large scale and large date systems in networked systems, signal processing, generated artificial intelligence (GAI), etc. It is also commonly called as the Kronecker product decomposition (KPD) \cite{van93}.

The applications of KPD  cover wide range of various fields, for example, (a) various problems from numerical linear algebra, referring to \cite{cha00} and the references therein; (b) analysis and synthesis of logical dynamic systems \cite{wei23}; (c) analysis of medical imaging data\cite{fen24}; (d) image processing
and preconditioning\cite{wu23}; (e) direction-of-arrival estimation in traffic and manufacturing systems \cite{wan21,joe24}; (f) room acoustic impulse responses \cite{dog22};(g) system identification \cite{pal18}, etc.

Particularly, the KPD technique has also been found many applications in AI, because it can
greatly reduce the total number of parameters in the model. This is particularly important for large scale models.  The related researches include (a)
    training block-wise sparse matrices for machine learning \cite{zhu24}; (b) GPT compression \cite{tah21, eda21,ayo24}; (c) CNN compression \cite{ham22}; (d) image process \cite{wu23b}, etc.

The main efforts have been focused on singular value decomposition (SVD) based approximation to the solution of KPD of matrices \cite{cos08,gra10,gar18}. In addition, some direct decomposition methods are also proposed\cite{bat17,wu23}.

There has also been some efforts in KPD of hypermatrices\cite{bat17,wan22}.

The KPD described by (\ref{1.1}) is called the KPD of vector form hypermatrices, because in (\ref{1.1}) ${\cal A}$ is considered as a vector, and all the decomposed components are vectors.

An alternative form KPD is the matrix form one, which can be described as follows: Let $A\in {\cal M}_{m\times n}$, where $m=\prod_{s=1}^dm_s$ and $n=\prod_{s=1}^dn_s$. The matrix form KPD is described as\cite{bat17}
\begin{align}\label{1.2}
A=\dsum_{k=1}^r A^k_1\otimes \cdots\otimes A^k_d,
\end{align}
where $A^s_k\in {\cal M}_{m_k\times n_k}$, $s\in[1,d]$, $k\in[1,r]$.

The KPD related problem is the nearest Kronecker product (NKP) problem, which can be described as follows.  Let $A\in {\cal M}_{m\times n}$, where $m=\prod_{s=1}^dm_s$ and $n=\prod_{s=1}^dn_s$. The NKP is described as follows \cite{van00}: Find $A^s_*$, $s\in [1,d]$ such that
\begin{align}\label{1.3}
A\approx A^1_*\otimes \cdots\otimes A^d_*,
\end{align}
where
$$
(A^1_*,\cdots,A^d_*)=\argmin_{A_1,\cdots,A_d}\|A-A^1\otimes \cdots\otimes A^d\|_F.
$$

It is obvious that (\ref{1.3}) is the matrix form NKP, and its corresponding vector form is as follows. Let ${\cal A}\in \R^{n_1\times \cdots\times n_d}$. Find  $x^s_*\in \R^{s}$, $s\in [1,d]$, such that
\begin{align}\label{1.301}
{\cal A}\approx \ltimes_{s=1}^dx^s_*,
\end{align}
where
$$
(x^1_*,\cdots,x^d_*)=\argmin_{x^1,\cdots,x^d}\|{\cal A}-\ltimes_{s=1}^dx^s\|_F.
$$

It is well known that a minimum sum KPD, i.e., with respect to the tensor rank, is a highly challenging problem\cite{one25}.

An interesting special case is the exact KPD, which asks in vector form when there exist $x^s_*$, $s\in [1,d]$ such that (\ref{1.301}) becomes an equality?  Or in matrix form when there exist $A^s_*$, $s\in [1,d]$ such that (\ref{1.3}) becomes an equality?

Recently, a numerical algorithm, called the  monic decomposition algorithm (MDA), has been developed \cite{chepr}.
Using MDA, an easily verifiable necessary and sufficient condition is obtained \cite{chepr}. There are many fundamental problems about tensor decomposition, which have been listed in \cite{mar04}. Although there have been  many research results in this field,  to our best knowledge,  general answers  to both KPD and NKP are still unknown.

The purpose of this paper is to develop a numerical algorithm, called the SVA to solve NKP. The algorithm aims to find  stationary values of the square error, and then find the minimum one. By using NKP iteratively, we can finally obtain the finite sum KPD. Numerical examples show that the SVA is very efficient and accurate. The advantages and disadvantages of SPBA, comparing with singular value based decomposition, are  discussed.

The rest of this paper is organized as follows.
Section 2 provides some necessary preliminaries, including (a) semi-tensor product (STP) of matrices; (b) hypermatrices;
(c) hypervectors. Section 3 considers the KPD of vector form hypermatrices. First, the exact decomposition is considered, the MDA and MDA based necessary and sufficient condition are reviewed. Then the NKP is considered. As the main result of this paper, an efficient algorithm, called the SVA, is proposed. Finally, using SVA iteratively yields the finite sum KPD. Section 4 introduces the permutation matrix. Using it, we convert the KPD of matrix form hypermatrices to the KPD of its equivalent vector form. A numerical example is presented to demonstrate the theoretical results.
Finally, a simple conclusion is presented in Section 5.

Before ending this section a list of notations is attached as follows.

\begin{enumerate}
\item $\R^n$: $n$ dimensional real Euclidean space.
\item $\Z_+$: Set of positive integers.

\item ${\cal M}_{m\times n}$: the set of $m\times n$ matrices.
%
%\item $\Col_i(A)$: i-th column of $A$.
%
\item $\lcm(a,b)$: least common multiple of $a$ and $b$.
\item $A\ltimes B$: semi-tensor product of $A$ and $B$.

\item  $\R^{n_1\times n_2\times \cdots \times n_d}$: the $d$-th order hypermatrices of dimensions $n_1\times\cdots\times n_d$.
\item  $\R^{n_1\ltimes n_2\ltimes \cdots \ltimes n_d}$: the $d$-th order hypervectors of dimensions $n_1,n_2,\cdots,n_d$.
\item ${\cal A}, {\cal B}$, etc.: hypermatrices.

\item $\vec{i}=Id(i;n)$: index $i\in [1,n]$,

\item $M^{\vec{j}\times \vec{k}}({\cal A})$: matrix expression of ${\cal A}$, which has rows labeled by $\vec{j}$ and columns labeled by $\vec{k}$.

\item $V_r(A)$ ($V_c(A)$): row stacking form (column stacking form) of $A$.

\item $[x]$: Integer part of $x$.

\item $[a,b]$: the set of integers $a\leq i \leq b$.
\item $\d_n^i$: the $i$-th column of the identity matrix $I_n$.
%
%\item $\D_n:=\left\{\d_n^i\;|\; i=1,\cdots,n\right\}$.
%
\item $\d_n[i_1,\cdots,i_s]:=\left[\d_n^{i_1},\cdots,\d_n^{i_s}\right]$.
\item $\J_{\ell}:=(\underbrace{1,1,\cdots,1}_{\ell})^\mathrm{T}$.
\item ${\bf S}_d$: $d$-th order permutation group.
\item $\otimes$:  Kronecker product of matrices.
\item $\ltimes$: semi-tensor product (STP) of matrices.
\end{enumerate}

\section{Preliminaries}

\subsection{Semi-Tensor Product of Matrices}

This subsection is a brief review on the STP of matrices \cite{che12}.

\begin{dfn}\label{d2.1.1} Let $A\in {\cal M}_{m\times n}$, $B\in {\cal M}_{p\times q}$, and $t=\lcm(n,p)$. The STP of $A$ and $B$ is defined as
\begin{align}\label{2.1.1}
		A\ltimes B=\left(A\otimes I_{t/n}\right) \left(B\otimes I_{t/p}\right).
	\end{align}
\end{dfn}

The STP is a generalization of the classical matrix product, i.e., when $n=p$,  $A\ltimes B=AB$. Because of this, we  omit the symbol $\ltimes$ in most cases.

{\bf Throughout this paper we assume the default matrix product is  STP.}

One of the important advantages of STP is that it keeps most of the properties of the classical matrix product available. In the following we review some basic properties of the STP, which will be used in the sequel.

\begin{prp}\label{p2.1.2}
	\begin{itemize}
		\item[(i)] (Associativity) Let $A,B,C$ be three matrices of arbitrary dimensions. Then $(A\ltimes B)\ltimes C=A\ltimes (B\ltimes C)$.
		\item[(ii)] (Distributivity) Let $A,B$ be two matrices of same dimension, $C$ is of arbitrary dimension. Then $(A + B)\ltimes C=A\ltimes C + B\ltimes C$, $C\ltimes (A + B)=C\ltimes A + C\ltimes B$.
		\item[(iii)] Let $A,B$ be two matrices of arbitrary dimensions. Then $(A \ltimes B)^T=B^T\ltimes A^T$. If $A$, $B$ are invertible, then $(A \ltimes B)^{-1}=B^{-1}\ltimes A^{-1}$.
	\end{itemize}
\end{prp}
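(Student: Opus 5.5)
The three claims in Proposition \ref{p2.1.2} can all be reduced to the classical Kronecker product identities
\begin{align*}
(A\otimes B)(C\otimes D)=(AC)\otimes(BD),\qquad
(A\otimes B)^T=A^T\otimes B^T,\qquad
(A\otimes B)^{-1}=A^{-1}\otimes B^{-1},
\end{align*}
together with $I_a\otimes I_b=I_{ab}$ and the associativity of $\otimes$. The strategy is to expand each STP through Definition \ref{d2.1.1} and use these identities to rewrite everything as ordinary matrix products of Kronecker-inflated factors. I will treat the three items in the order (ii), (iii), (i), from easiest to hardest.

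For (ii), note that $A$ and $B$ have the same size, so $A\ltimes C$, $B\ltimes C$ and $(A+B)\ltimes C$ all use the same $t$. The claim then follows from $(A+B)\otimes I=A\otimes I+B\otimes I$ and the distributivity of ordinary matrix multiplication; the second identity is proved the same way.

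For (iii), take transposes:
\begin{align*}
(A\ltimes B)^T=(B\otimes I_{t/p})^T(A\otimes I_{t/n})^T=(B^T\otimes I_{t/p})(A^T\otimes I_{t/n}).
\end{align*}
Here $B^T$ has $p$ columns and $A^T$ has $n$ rows, and $\lcm(p,n)=t$. So the right-hand side is exactly $B^T\ltimes A^T$. Invertibility requires $A$ and $B$ to be square, and then $A\otimes I_{t/n}$ and $B\otimes I_{t/p}$ are both invertible $t\times t$ matrices. Their product has inverse $(B^{-1}\otimes I_{t/p})(A^{-1}\otimes I_{t/n})$, which equals $B^{-1}\ltimes A^{-1}$ by the same lcm observation.

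The main obstacle is (i), because the dimensions do not match up directly. Let $A\in{\cal M}_{m\times n}$, $B\in{\cal M}_{p\times q}$ and $C\in{\cal M}_{r\times s}$. I plan to show that both $(A\ltimes B)\ltimes C$ and $A\ltimes(B\ltimes C)$ equal
\begin{align*}
(A\otimes I_{\alpha})(B\otimes I_{\beta})(C\otimes I_{\gamma}),
\end{align*}
where the ordinary products are conformable and $\alpha,\beta,\gamma$ are chosen minimally.

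For the left side, $A\ltimes B$ has $qt/p$ columns with $t=\lcm(n,p)$. Let $t'=\lcm(qt/p,\,r)$. Then
\begin{align*}
(A\ltimes B)\otimes I_{t'/(qt/p)}=\bigl(A\otimes I_{t/n}\otimes I_{k}\bigr)\bigl(B\otimes I_{t/p}\otimes I_{k}\bigr),\qquad k=t'p/(qt),
\end{align*}
using the mixed-product rule, and the factor $C$ becomes $C\otimes I_{t'/r}$. The right side is handled symmetrically.

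What remains is to check that the resulting exponents coincide. This amounts to number-theoretic identities of the form
\begin{align*}
\lcm\bigl(q\lcm(n,p)/p,\;r\bigr)\,p/q \;=\; \lcm\bigl(n,\;p\lcm(q,r)/q\bigr)
\end{align*}
and its companions. I would verify these prime by prime using $p$-adic valuations, where $\lcm$ becomes $\max$, so each identity reduces to an elementary max-identity; this is the tedious core of the argument. As an alternative check, I would use the known property that, in the divisible case, $A\ltimes B=A(B\otimes I)$ or $(A\otimes I)B$.
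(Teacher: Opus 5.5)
The paper gives no proof of Proposition~\ref{p2.1.2}. It is recalled as background from \cite{che12}, so there is no in-paper argument to compare with. Your proposal is a correct, self-contained proof.

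Items (ii) and (iii) are complete as written. The points you isolate are exactly what is needed:
\begin{itemize}
\item the same $t$ serves $A$, $B$ and $A+B$;
\item $\lcm(p,n)=\lcm(n,p)$ handles the reversed product;
\item invertibility forces squareness, so both inflated factors are invertible $t\times t$ matrices.
\end{itemize}

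For (i), the step you defer as ``tedious'' is short, and there are no independent companion identities. Set $t=\lcm(n,p)$, $t'=\lcm(qt/p,r)$, $u=\lcm(q,r)$ and $u'=\lcm(n,pu/q)$. Your expansions then give
\begin{align*}
(A\ltimes B)\ltimes C&=(A\otimes I_{t'p/(qn)})(B\otimes I_{t'/q})(C\otimes I_{t'/r}),\\
A\ltimes (B\ltimes C)&=(A\otimes I_{u'/n})(B\otimes I_{u'/p})(C\otimes I_{u'q/(pr)}).
\end{align*}
Each of the three exponent equalities is equivalent to the single identity $t'p/q=u'$.

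To check it, fix a prime and let $a,b,c,d$ be its exponents in $n,p,q,r$. The exponent of that prime in $t'p/q$ is $\max(c+\max(a,b)-b,\,d)+b-c=\max(a,b,b+d-c)$. Its exponent in $u'$ is $\max(a,\,b+\max(c,d)-c)=\max(a,b,b+d-c)$. The two agree for every prime, which completes the proof.
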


\begin{prp}\label{p2.1.3}
	Let $x\in \R^n$ be a column vector. Then $xA=(I_n\otimes A)x$.
		
Let $\omega\in \R^n$ be a row vector. Then $A\omega=\omega(I_n\otimes A)$.
\end{prp}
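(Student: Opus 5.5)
The plan is to reduce both identities in Proposition~\ref{p2.1.3} to Definition~\ref{d2.1.1} and the mixed-product rule $(P\otimes Q)(R\otimes S)=PR\otimes QS$ of the Kronecker product.

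\emph{First identity.} Let $x\in\R^n$ be a column vector and $A\in{\cal M}_{p\times q}$. Viewing $x$ as an $n\times 1$ matrix, its column dimension is $1$. Hence $t=\lcm(1,p)=p$, and Definition~\ref{d2.1.1} gives
\begin{align*}
xA=(x\otimes I_p)\,A .
\end{align*}
On the other side, $I_n\otimes A$ is $np\times nq$ and $x$ has $n$ rows. Since $n$ divides $nq$, we get $t=nq$, and
\begin{align*}
(I_n\otimes A)\ltimes x=(I_n\otimes A)(x\otimes I_q).
\end{align*}
Write $A=A\otimes 1$ with $1$ the $1\times 1$ identity, and $x=x\otimes 1$. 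The mixed-product rule then gives $(I_n\otimes A)(x\otimes I_q)=(I_nx)\otimes(AI_q)=x\otimes A$. Likewise $(x\otimes I_p)(1\otimes A)=x\otimes A$. So both sides equal $x\otimes A$, which proves the first identity.

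\emph{Second identity.} Let $\omega$ be a $1\times n$ row vector. There are two routes.

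\textbf{Route 1: transpose.} By Proposition~\ref{p2.1.2}(iii), $(A\omega)^T=\omega^TA^T$. Applying the first identity to the column vector $\omega^T$ and the matrix $A^T$ gives $\omega^TA^T=(I_n\otimes A^T)\omega^T$. Transposing back with Proposition~\ref{p2.1.2}(iii) again yields $A\omega=\omega(I_n\otimes A)$.

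\textbf{Route 2: direct computation.} Here $A\omega=(A\otimes I_{t/q})(\omega\otimes I_{t/n})$ with $t=\lcm(q,n)$. This is messier because the lcm is general, so I prefer Route 1. The right side $\omega\ltimes(I_n\otimes A)$ has $t=np$, so it equals $(\omega\otimes I_p)(I_n\otimes A)=\omega\otimes A$. Therefore Route 1 in fact shows $A\omega=\omega\otimes A$.

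\emph{Main obstacle.} The one place needing care is the dimension bookkeeping in the first identity when $n\nmid q$ or $p$ is arbitrary. One must confirm that the lcm factors are exactly $p$ and $q$, so that the mixed-product rule applies with conformable blocks. Checking these dimensions is routine but is the only real content of the argument.
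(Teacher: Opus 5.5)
Your proof is correct. The paper gives no proof of its own here: it lists the statement as a known STP property from \cite{che12}. Your argument is the standard verification: expand both sides with Definition~\ref{d2.1.1} and the mixed-product rule, and see that each equals $x\otimes A$. Getting the row-vector case by transposition via Proposition~\ref{p2.1.2}(iii) and $(I_n\otimes A^{\mathrm{T}})^{\mathrm{T}}=I_n\otimes A$ is also fine.

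Your Route~2 and ``main obstacle'' remarks contain a miscount. A row vector $\omega\in\R^n$ is $1\times n$. In $A\ltimes\omega$, the column number $q$ of $A$ is therefore matched against $1$, not $n$. So $t=\lcm(q,1)=q$, and $A\ltimes\omega=A(\omega\otimes I_q)=(1\otimes A)(\omega\otimes I_q)=\omega\otimes A$. Together with your correct evaluation $\omega\ltimes(I_n\otimes A)=(\omega\otimes I_p)(I_n\otimes A)=\omega\otimes A$, this settles the second identity directly, with no general lcm to handle. Likewise, in the first identity $\lcm(1,p)=p$ and $\lcm(nq,n)=nq$ hold unconditionally, so no case with $n\nmid q$ needs care.
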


\begin{dfn}\label{d2.1.4} The $m\times n$ swap matrix is defined as follows:
\begin{align}\label{2.1.2}
W_{[m,n]}=\left[I_n\d_m^1,I_n\d_m^2,\cdots,I_n\d_m^m\right]\in {\cal M}_{mn\times mn}.
\end{align}
\end{dfn}

\begin{prp}\label{p2.1.5}
\begin{itemize}
\item[(i)]
\begin{align}\label{2.1.3}
W^{-1}_{[m,n]}=W^{\mathrm{T}}_{[m,n]}=W_{[n,m]}.
\end{align}
\item[(ii)]
Let $x\in \R^m$ and $y\in \R^n$ be two column vectors. Then
 \begin{align}\label{2.1.4}
W_{[m,n]}xy=yx.
\end{align}
\item[(iii)]
Let $\xi\in \R^m$ and $\eta\R^n$ be two row vectors. Then
 \begin{align}\label{2.1.5}
\xi\eta W_{[n,m]}=\eta \xi.
\end{align}
\end{itemize}
\end{prp}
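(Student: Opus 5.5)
Everything here reduces to the action of $W_{[m,n]}$ on products of basis vectors $\d_m^i\d_n^j$, so I will first record that action and then extend by linearity. By Definition \ref{d2.1.4}, the column of $W_{[m,n]}$ in position $(i-1)n+j$ is $I_n\d_m^i$ followed by the $j$-th column of $I_n$. Here $I_n\d_m^i$ is an STP with $t=\lcm(n,m)$, which by Proposition \ref{p2.1.3} equals $\d_m^i\otimes I_n$ in the sense of the block structure intended by the definition. Hence that column is $\d_n^j\otimes\d_m^i=\d_n^j\d_m^i$.

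Since $\d_m^i\d_n^j=\d_{mn}^{(i-1)n+j}$, this says exactly that
\begin{align*}
W_{[m,n]}\,\d_m^i\d_n^j=\d_n^j\d_m^i,\qquad i\in[1,m],~j\in[1,n].
\end{align*}

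For (ii), I write $x=\dsum_i x_i\d_m^i$ and $y=\dsum_j y_j\d_n^j$. Because $xy=x\otimes y$ for column vectors is bilinear, distributivity (Proposition \ref{p2.1.2}(ii)) gives
\begin{align*}
W_{[m,n]}xy=\dsum_{i,j}x_iy_j\,\d_n^j\d_m^i=yx.
\end{align*}

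For (i), the identity above shows that $W_{[m,n]}$ maps the standard basis of $\R^{mn}$ bijectively onto itself. It is therefore a permutation matrix, so $W^{-1}_{[m,n]}=W^{\mathrm T}_{[m,n]}$. In addition, $W_{[n,m]}$ sends $\d_n^j\d_m^i$ back to $\d_m^i\d_n^j$, so $W_{[n,m]}W_{[m,n]}$ fixes every basis vector. Thus $W_{[n,m]}W_{[m,n]}=I_{mn}$, and hence $W_{[n,m]}=W^{-1}_{[m,n]}$.

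For (iii), I take transposes and use Proposition \ref{p2.1.2}(iii). With $\xi^{\mathrm T}\in\R^m$ and $\eta^{\mathrm T}\in\R^n$ column vectors, part (i) and then part (ii) give
\begin{align*}
(\xi\eta W_{[n,m]})^{\mathrm T}=W_{[m,n]}\,\xi^{\mathrm T}\eta^{\mathrm T}=\eta^{\mathrm T}\xi^{\mathrm T}=(\xi\eta)^{\mathrm T}\ \text{ with factors swapped}=(\eta\xi)^{\mathrm T}.
\end{align*}
Transposing back yields $\xi\eta W_{[n,m]}=\eta\xi$.

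The main obstacle is the first step: identifying the columns of $W_{[m,n]}$ as $\d_n^j\d_m^i$ from the compact notation in Definition \ref{d2.1.4}. That notation is ambiguous, since "$I_n\d_m^i$" could be read either as an $mn\times n$ block or as an STP. I will resolve it by reading it as the $mn\times n$ block $\d_m^i\otimes I_n$, which has exactly the $n$ columns needed to make $W_{[m,n]}$ square of size $mn$. After that, all three parts follow from linearity and the transpose rule.
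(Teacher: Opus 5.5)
The paper gives no proof here; it recalls the result from the STP literature. Your plan (compute $W_{[m,n]}\delta_m^i\delta_n^j$, then extend by linearity) is the standard one, but two steps are wrong as written.

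First, the column identification. You explicitly decide to read the block $I_n\delta_m^i$ as $\delta_m^i\otimes I_n$. Under that reading $W_{[m,n]}=[\delta_m^1\otimes I_n,\dots,\delta_m^m\otimes I_n]=I_m\otimes I_n=I_{mn}$. The $j$-th column of that block is $\delta_m^i\delta_n^j$, not $\delta_n^j\delta_m^i$, so (ii) would be false.

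Your justification is also off. Proposition~\ref{p2.1.3} evaluates $\delta_m^i\ltimes I_n=\delta_m^i\otimes I_n$, with the vector on the left. The STP $I_n\ltimes\delta_m^i$ is an $mn\times n$ matrix only when $\gcd(m,n)=1$; for $m=n$ it is just $\delta_n^i$.

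The block has to be the Kronecker product $I_n\otimes\delta_m^i$; the $\otimes$ is simply omitted in Definition~\ref{d2.1.4}. Its $j$-th column is $\delta_n^j\otimes\delta_m^i=\delta_{mn}^{(j-1)m+i}$. With that correction, your arguments for (i) and (ii) go through verbatim.

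Second, part (iii). Proposition~\ref{p2.1.2}(iii), which you cite, gives $(\xi\eta)^{\mathrm T}=\eta^{\mathrm T}\xi^{\mathrm T}$. Your first equality instead uses $\xi^{\mathrm T}\eta^{\mathrm T}$, while your third uses $\eta^{\mathrm T}\xi^{\mathrm T}$. The phrase ``with factors swapped'' glues together two incompatible transpose rules.

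Done consistently, $(\xi\eta W_{[n,m]})^{\mathrm T}=W_{[m,n]}\eta^{\mathrm T}\xi^{\mathrm T}$ with $\eta^{\mathrm T}\in\R^n$ and $\xi^{\mathrm T}\in\R^m$. Part (ii) swaps such a product only with $W_{[n,m]}$, not $W_{[m,n]}$.

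In fact, under the paper's STP convention (for row vectors $\xi\ltimes\eta=\eta\otimes\xi$), the printed identity fails when $m\neq n$. Take $m=2$, $n=3$, $\xi=(1,0)$, $\eta=(0,1,0)$. Then $\xi\eta=(\delta_6^3)^{\mathrm T}$, $\eta\xi=(\delta_6^2)^{\mathrm T}$, and $W_{[3,2]}=\delta_6[1,4,2,5,3,6]$. Hence $\xi\eta W_{[3,2]}=(\delta_6^5)^{\mathrm T}\neq\eta\xi$.

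What transposing (ii) actually yields is $\xi\eta W_{[m,n]}=\eta\xi$ for $\xi\in\R^{1\times m}$, $\eta\in\R^{1\times n}$. The printed form is what one gets if juxtaposed row vectors are read as $\xi\otimes\eta$; then $(\xi\otimes\eta)^{\mathrm T}=\xi^{\mathrm T}\otimes\eta^{\mathrm T}$ and your chain is valid. You need to fix one convention explicitly and state (iii) to match it, rather than reach the printed version through an inconsistent transpose.
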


\subsection{Hypermatrix}

\begin{dfn}\label{d2.2.1} \cite{lim13} For $n_1,\cdots,n_d\in \Z_+$, a function $f:[1,n_1]\times \cdots\times [1,n_d]\ra \R$ is a real hypermatrix of order $d$ and dimension $n_1\times \cdots\times n_d$.
\end{dfn}

Equivalently, a hypermatrix, ${\cal A}$, is  commonly considered as a set of ordered data with $i_s=[1,n_s]$, $s\in [1,d]$ as indexes, that is,
\begin{align}\label{2.2.1}
{\cal A}=\left\{a_{i_1,\quad,i_d}\;|\l i_s=[1,n_s],~s\in [1,d]\right\}\in \R^{n_1\times \cdots\times n_d}.
\end{align}
where $\R^{n_1\times \cdots\times n_d}$ is the set of hypermatrices of order $d$ and dimension $n_1\times\cdots\times n_d$.

Since the set of indexes play an important role in our following study, we first give a detailed discussion on the set of indexes.

An index, denoted by $\vec{i}$, is considered as a finite ordered set. The number of the set elements is called the length of the index, denoted by $|\vec{i}|$.
Two indexes are said to be equivalent, if there is an order-keeping one-to-one correspondence.
For example, if $\vec{i}=[1,n]$, $\vec{j}=[2,n+1]$, then $\vec{i}\sim\vec{j}$.

\begin{dfn}\label{d2.2.2} Let $\vec{i}=[1,m]$ and $\vec{j}=[1,n]$. The product of $\vec{i}$ and $\vec{j}$ is a double index, defined by
\begin{align}\label{2.2.2}
\vec{i}*\vec{j}:=\{(1,1),(1,2),\cdots,(1,n),(2,1),\cdots, (m,n)\},
\end{align}
where the elements are arranged in an alphabetic order.
\end{dfn}

Assume $\vec{k}=[1,mn]=\vec{i}\vec{j}$, where $*$ is omitted. Then it is easy to verify that
\begin{align}\label{2.2.3}
k=(i-1)n+j.
\end{align}
Conversely,
\begin{align}\label{2.2.4}
i=\left[(k-1)/n\right]+1,\quad j=k-in,
\end{align}
where $[x]$ is the integral part of $x$.

In general, we have the following index converting formula.

\begin{prp}\label{p2.2.3} Consider $\vec{i}_s=[1,n_s]$, $s\in [1,d]$. Assume $\vec{k}=\vec{i}_1\cdots\vec{i}_d$.
\begin{itemize}
\item[(i)]
\begin{align}\label{2.2.5}
k=(i_1-1)n_2\cdots n_d+(i_2-1)n_3\cdots n_d+\cdots+(i_{d-1}-1)n_d+i_d.
\end{align}
\item[(ii)]
\begin{align}\label{2.2.6}
\begin{cases}
k_0:=k-1,\\
k_{d-s+1}=\left[\frac{k_{d-s}}{n_{s}}\right],\\
i_s=k_{d-s}-k_{d-s+1}*n_s+1,\\~~\quad s=d,d-1,\cdots,2,\\
i_1=k_{d-1}+1.\\
\end{cases}
\end{align}
\end{itemize}
\end{prp}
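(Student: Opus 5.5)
The plan is to prove both parts by induction on $d$, using the two-index case (\ref{2.2.3})--(\ref{2.2.4}) as the base and as the inductive step. Throughout, I write $N_s:=n_{s+1}n_{s+2}\cdots n_d$ for $s\in[1,d-1]$ and $N_d:=1$.

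For (i), the case $d=2$ is exactly (\ref{2.2.3}), which follows from the alphabetic order in Definition \ref{d2.2.2}: the pair $(i,j)$ is preceded by the $(i-1)n$ pairs whose first coordinate is smaller, and then by $j-1$ further pairs. For the inductive step, set $\vec{m}:=\vec{i}_2\cdots\vec{i}_d$, which is equivalent to $[1,N_1]$, so that $\vec{k}=\vec{i}_1\vec{m}$. Here I use that the product of indexes is associative with respect to the alphabetic ordering. This is immediate because lexicographic order on $(i_1,(i_2,\dots,i_d))$ coincides with lexicographic order on $(i_1,i_2,\dots,i_d)$. The two-index formula then gives $k=(i_1-1)N_1+m$. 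The induction hypothesis gives $m=(i_2-1)N_2+\cdots+(i_{d-1}-1)n_d+i_d$, and substituting yields (\ref{2.2.5}).

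For (ii), I rewrite (\ref{2.2.5}) as a mixed-radix expansion:
\begin{align*}
k_0=k-1=\dsum_{s=1}^d (i_s-1)N_s,\qquad 0\le i_s-1\le n_s-1 .
\end{align*}
I then prove by induction on $t=0,1,\dots,d-1$ the claim
\begin{align*}
k_t=\dsum_{s=1}^{d-t}(i_s-1)\,n_{s+1}\cdots n_{d-t},
\end{align*}
where the empty product is $1$. The claim holds for $t=0$ by definition. Given the claim for $t$, I split off the last term:
\begin{align*}
k_t=n_{d-t}\left(\dsum_{s=1}^{d-t-1}(i_s-1)\,n_{s+1}\cdots n_{d-t-1}\right)+(i_{d-t}-1).
\end{align*}
Since $0\le i_{d-t}-1<n_{d-t}$, this is the Euclidean division of $k_t$ by $n_{d-t}$. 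Hence $\left[k_t/n_{d-t}\right]$ equals the bracketed quantity, which is the claim for $t+1$. It also follows that $i_{d-t}=k_t-k_{t+1}n_{d-t}+1$. Putting $s=d-t$, these are exactly the recursions in (\ref{2.2.6}) for $s=d,\dots,2$. Finally, the case $t=d-1$ of the claim gives $k_{d-1}=i_1-1$, which is the last line of (\ref{2.2.6}).

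The only delicate point is the uniqueness of quotient and remainder. This rests on the bound $0\le i_s-1\le n_s-1$ for every $s$, which holds because $i_s\in[1,n_s]$. I will also check that the indexing in (\ref{2.2.6}) matches: $k_{d-s+1}$ is obtained by dividing $k_{d-s}$ by $n_s$, consistent with $t=d-s$. Everything else is routine substitution.
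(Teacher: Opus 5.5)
Your proof is correct and complete. The paper gives no argument of its own and simply presents the proposition as the $d$-fold generalization of (\ref{2.2.3})--(\ref{2.2.4}); your induction on $d$ for (i) and Euclidean-division induction on $t$ for (ii) are the routine verification the paper leaves implicit. As a minor aside, (\ref{2.2.4}) as printed reads $j=k-in$ rather than $j=k-(i-1)n$. Your opening sentence cites it, but your part (ii) never actually relies on it, so the argument is unaffected.
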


If $|\vec{i}|=1$, then we denote it by $\vec{1}$, which  is a dummy index. It is obvious that for any index $\vec{k}$ we have
\begin{align}\label{2.2.7}
\vec{k}\vec{1}=\vec{1}\vec{k}=\vec{k}.
\end{align}

A straightforward verification shows that the product $*$ is associative, i.e., for any three indexes $\vec{i}$, $\vec{j}$, and $\vec{k}$
\begin{align}\label{2.2.8}
(\vec{i}\vec{j})\vec{k}=\vec{i}(\vec{j}\vec{k}).
\end{align}

Denote by $Id$ the set of indexes. (\ref{2.2.7}) and (\ref{2.2.8}) yield the following result.

\begin{prp}\label{p2.2.4}
$(Id,*)$ is a monoid (semi-group with identity).
\end{prp}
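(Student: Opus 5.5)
To show that $(Id,*)$ is a monoid, I have to check three things: the product $*$ is well defined (closed) on $Id$, it is associative, and $\vec{1}$ is a two-sided identity. The approach is to reduce everything to the lexicographic (alphabetic) ordering of tuples. Every index is, up to the equivalence $\sim$, determined by its length. So I regard an element of $Id$ as an equivalence class, represented by $[1,n]$ with $n=|\vec{i}|$.

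\textbf{Closure.} For $\vec{i}=[1,m]$ and $\vec{j}=[1,n]$, Definition \ref{d2.2.2} makes $\vec{i}*\vec{j}$ a finite set of $mn$ pairs carrying the alphabetic order. This is a totally ordered finite set, hence an index. By (\ref{2.2.3}) the map $(i,j)\mapsto (i-1)n+j$ is an order-preserving bijection onto $[1,mn]$, so $\vec{i}\vec{j}\sim[1,mn]$. I also need $*$ to respect $\sim$: if $\vec{i}\sim\vec{i}'$ and $\vec{j}\sim\vec{j}'$, then the product of the two order-preserving bijections is an order-preserving bijection of the lexicographically ordered pairs. Hence $*$ descends to equivalence classes, and $|\vec{i}\vec{j}|=|\vec{i}||\vec{j}|$.

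\textbf{Identity.} With $|\vec{1}|=1$, the set $\vec{k}\vec{1}$ consists of the pairs $(k,1)$. The map $(k,1)\mapsto k$ is an order-preserving bijection, and so is $(1,k)\mapsto k$. This gives (\ref{2.2.7}).

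\textbf{Associativity.} This is the only step needing real care. Take $\vec{i}=[1,m]$, $\vec{j}=[1,n]$, $\vec{k}=[1,p]$. Using (\ref{2.2.3}) twice, $(\vec{i}\vec{j})\vec{k}$ is identified with $[1,mnp]$ through
\[
(i,j,k)\mapsto ((i-1)n+j-1)p+k,
\]
and $\vec{i}(\vec{j}\vec{k})$ is identified with $[1,mnp]$ through
\[
(i,j,k)\mapsto (i-1)np+(j-1)p+k .
\]
These two expressions coincide, and both equal formula (\ref{2.2.5}) for $d=3$. Equivalently, lexicographic order on $((i,j),k)$ and on $(i,(j,k))$ both agree with lexicographic order on the triple $(i,j,k)$. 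So the two products are equivalent as ordered sets, which is (\ref{2.2.8}).

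A semigroup with an identity element is a monoid, which proves the statement. The main point to handle carefully is that the objects are ordered sets modulo $\sim$. Once indexes are identified with their lengths via the order-preserving bijections of Proposition \ref{p2.2.3}, the monoid $(Id,*)$ is isomorphic to $(\Z_+,\cdot)$, and associativity and identity follow from those of integer multiplication.
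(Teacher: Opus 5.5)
Your proposal is correct and follows the paper's route. The paper simply cites the identity law (\ref{2.2.7}) and associativity (\ref{2.2.8}), the latter justified as a ``straightforward verification,'' and your use of the index formulas (\ref{2.2.3}) and (\ref{2.2.5}) supplies exactly that verification. Your choice to work modulo the order-preserving equivalence $\sim$ is also worthwhile, since (\ref{2.2.7}) and (\ref{2.2.8}) are not literal set equalities (pairs $((i,j),k)$ versus $(i,(j,k))$) and hold only up to that identification.
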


\begin{rem}\label{r2.2.401} Sometimes the dummy index $\vec{1}$ is useful. Say, assume $\vec{i}=\vec{i}_1\cdots\vec{i}_{2k-1}$ has odd factor indexes. If we want to split it into two equal factor subsets, a dummy index can be added as $\vec{i}_{2k}=\vec{1}$.
\end{rem}

To apply the methods and results developed in matrix theory to hypermatrices, the following  technique is fundamental to convert a hypermatrix into a matrix form, called the matricizing \cite{mar04}.

\begin{dfn}\label{d2.2.5} Consider a hypermatrix
$$
{\cal A}=\{a_{i_1,\cdots,i_d}\;|\; i_s\in[1,n_s],~s\in [1,d]\}\in \R^{n_1\times \cdots \times n_d}.
$$
 Assume
$\vec{i}=\vec{i}_1 \cdots \vec{i}_d$,  $\vec{j}=\vec{j}_1 \cdots \vec{j}_r$, $\vec{k}=\vec{k}_1 \cdots \vec{k}_s$, and
$\vec{i}=\vec{j} \vec{k}$
is a partition, where $r+s=d$. Then ${\cal A}$ can be arranged into a matrix, called the matricizing of ${\cal A}$, as
\begin{align}\label{2.2.9}%{3.1}
A=M^{\vec{j}\times \vec{k}} ({\cal A}),
\end{align}
where  $A=(a_{j,k})$ of dimension $n^j\times n^k$ with $n^j=|\vec{j}|$ and $n^k=|\vec{k}|$.
\end{dfn}

It is obvious that the matricizing of a hypermatrix is not unique. It depends on the index partition. We consider some special cases.

\begin{itemize}
\item[(i)] Vector Form:

If ${\bf k}=\vec{1}$ (or ${\bf j}=\vec{1}$), then ${\cal A}$ is arranged into a column (correspondingly, a row) vector as
\begin{align*}
V({\cal A})=M^{\vec{i}\times \vec{1}}({\cal A})=(a_{1,\cdots,1}, a_{1,\cdots,2},\cdots, a_{n_1,\cdots,n_d})^{\mathrm{T}}.
\end{align*}
Correspondingly,
\begin{align*}
V^{\mathrm{T}}({\cal A})=M^{\vec{1}\times \vec{i}}({\cal A})=(a_{1,\cdots,1}, a_{1,\cdots,2},\cdots, a_{n_1,\cdots,n_d}).
\end{align*}

\item[(ii)] Single-Index Form:

If $\vec{j}=\vec{i}_1$, it is a column single-index form, denoted by
\begin{align*}
	\begin{array}{l}
	M_r({\cal A})=M^{\vec{i}_1\times \vec{i}_2 \cdots \vec{i}_d}({\cal A})
	=\begin{bmatrix}
			a_{1,\cdots,1}&a_{1,\cdots,2}&\cdots&a_{1,n_2,\cdots,n_d}\\
			a_{2,\cdots,1}&a_{2,\cdots,2}&\cdots&a_{2,n_2,\cdots,n_d}\\
			\vdots&~&~&~\\
			a_{n_1,\cdots,1}&a_{n_1,\cdots,2}&\cdots&a_{n_1,n_2,\cdots,n_d}\\
		\end{bmatrix}
	\end{array}
\end{align*}
If $\vec{k}=\vec{i}_d$, it is a row single-index form, denoted by
\begin{align*}
	\begin{array}{l}
	M_c({\cal A})=M^{\vec{i}_1\cdots \vec{i}_{d-1} \times \vec{i}_d}({\cal A})
	=\begin{bmatrix}
			a_{1,\cdots,1,1}&a_{1,\cdots,1,2}&\cdots&a_{1,\cdots,1,n_d}\\
			a_{1,\cdots,2,1}&a_{1,\cdots,2,2}&\cdots&a_{1,\cdots,2,n_d}\\
			\vdots&~&~&~\\
			a_{n_1,\cdots,n_{d-1},1}&a_{n_1,\cdots,n_{d-1},2}&\cdots&a_{n_1,\cdots,n_{d-1},n_d}\\
		\end{bmatrix}
	\end{array}
\end{align*}

\end{itemize}

We give an example to describe this.

\begin{exa}\label{e2.2.6}

Given ${\cal A}=(a_{i_1,i_2,i_3})\in \R^{2\times 3\times 2}$. Consider $M^{\vec{j}\times \vec{k}}({\cal A})$.

\begin{itemize}
\item[(i)] $\vec{j}=\vec{i}_1\vec{i}_2\vec{i}_3$ and $\vec{k}=\vec{1}$:
$$
\begin{array}{l}
			V({\cal A})=M^{ \{i_1,i_2,i_3\}\times \vec{1}}({\cal A})=[a_{111},a_{112},a_{121},a_{122},\\
			~~~~~~~~a_{131},a_{132},a_{211},a_{212},a_{221},a_{222},a_{231},a_{232}]^{\mathrm{T}}.\\
\end{array}
$$

\item[(ii)] $\vec{j}=\vec{1}$ and $\vec{k}=\vec{i}_1\vec{i}_2\vec{i}_3$:
$$
V^{\mathrm{T}}({\cal A})=\left(V({\cal A})\right)^{\mathrm{T}}.
$$

\item[(iii)] $\vec{j}=\vec{i}_1$ and $\vec{k}=\vec{i}_2\vec{i}_3$:
$$
M_r({\cal A})=M^{\{i_1\}\times \{i_2,i_3\}}({\cal A})=
	\begin{bmatrix}
				a_{111}&a_{112}&a_{121}&a_{122}&a_{131}&a_{132}\\
				a_{211}&a_{212}&a_{221}&a_{222}&a_{231}&a_{232}
			\end{bmatrix}.\\
$$

\item[(iv)]  $\vec{j}=\vec{i}_1\vec{j}$ and $\vec{k}=\vec{i}_3$:
$$
M_c({\cal A})=	M^{\{i_1,i_2\}\times \{i_3\}}({\cal A})=
\begin{bmatrix}
			a_{111}&a_{112}\\
			a_{121}&a_{122}\\
			a_{131}&a_{132}\\
			a_{211}&a_{211}\\
            a_{221}&a_{222}\\
            a_{231}&a_{232}
		\end{bmatrix}.
$$
\end{itemize}
\end{exa}

\begin{rem}\label{r2.2.7}
\begin{itemize}
\item[(i)] A hypermatrix ${\cal A}$ has two kinds of expressions: One is an ordered set of date, such as in (\ref{2.2.1}), we call it the matrix set. The other one is a special matrix form under a fixed matricizing, which will be called the matrix form. Just like in matrix theory, the properties of matrices (including vectors) must be investigated with a special matrix form but the matrix set, most  properties of hypermatrices must be investigated with respect to some special matrix expressions.
\item[(ii)] Based on the perspective of (i), the order of a hypermatrix is of less importance. Particularly, when the dummy index is introduced, the order of a hypermatrix can easily be modified.
\end{itemize}
\end{rem}

\subsection{Hypervectors}

\begin{dfn}\label{d2.3.1} \cite{che25} Assume $x\in \R^n$ where $n=\prod_{k=1}^dn_k$. ($n_k>1$, $k\in [1,d]$)
$x$ is said to be a hypervector with respect to $n_1\times\cdots\times n_d$, if there exist $x_i\in \R^{n_i}$ such that
\begin{align}\label{2.3.1}
x=\ltimes_{i=1}^dx_i.
\end{align}
$x_i$, $i\in [1,d]$ are called the components of $x$. A hypervector is also called an order-1 form.

The set of such hypervectors is denoted by $\R^{n_1\ltimes \cdots\ltimes n_d}$, where $d$ is the order $n_1\times \cdots\times n_d$ is the dimension of the hypervector.
\end{dfn}

The basic properties of hypermatrices are listed as follows.

\begin{prp}\label{p2.3.2}  Assume $0\neq x\in \R^n$ where $n=\prod_{k=1}^dn_k$ ($n_k>1$, $k\in [1,d]$), and
$$
x=\ltimes_{k=1}^dx_k=\ltimes_{k=1}^dy_k,
$$
where $x_k,y_k\in\R^{n_k}$, $k\in [1,d]$.
Then
$$
y_k=c_kx_k,\quad k\in [1,d],
$$
and
$$
\prod_{k=1}^dc_k=1.
$$
\end{prp}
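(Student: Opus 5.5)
Since $x\neq 0$, none of the factors $x_k$ or $y_k$ can be zero. Indeed, $\ltimes_{k=1}^d x_k$ is the Kronecker product $x_1\otimes\cdots\otimes x_d$: all factors are column vectors, so by Definition \ref{d2.1.1} the STP reduces to the Kronecker product. A Kronecker product vanishes as soon as one factor vanishes. I will work with this Kronecker form, whose entries, indexed through Proposition \ref{p2.2.3}, are
$$
x_{(i_1,\dots,i_d)}=\prod_{k=1}^d (x_k)_{i_k}.
$$
The plan is to prove the statement by induction on $d$, reducing everything to the case $d=2$.

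For $d=2$, reshape $x\in\R^{n_1n_2}$ into the $n_1\times n_2$ matrix $M^{\vec{i}_1\times\vec{i}_2}$. Under this reshaping $x_1x_2$ becomes the rank-one matrix $x_1x_2^{\mathrm T}$, so the hypothesis reads $x_1x_2^{\mathrm T}=y_1y_2^{\mathrm T}\neq 0$. The column space of this matrix is $\Span\{x_1\}=\Span\{y_1\}$, hence $y_1=c_1x_1$ with $c_1\neq0$. Substituting gives $x_1(x_2^{\mathrm T}-c_1y_2^{\mathrm T})=0$, and since $x_1\neq0$ this forces $y_2=c_1^{-1}x_2$. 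Taking $c_2=c_1^{-1}$ gives $c_1c_2=1$.

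For general $d$, group the factors as $x=x_1\ltimes(\ltimes_{k=2}^d x_k)$, a product of a vector in $\R^{n_1}$ with a vector in $\R^{n_2\cdots n_d}$; do the same for the $y_k$. Here I use associativity (Proposition \ref{p2.1.2}(i)). The $d=2$ case applies with second dimension $n_2\cdots n_d$. It gives $y_1=c_1x_1$ and
$$
\ltimes_{k=2}^d y_k=c_1^{-1}\ltimes_{k=2}^d x_k=(c_1^{-1}x_2)\ltimes_{k=3}^d x_k .
$$
This vector is nonzero. The induction hypothesis for order $d-1$, applied to the components $c_1^{-1}x_2,x_3,\dots,x_d$, then yields scalars $c_2',c_3,\dots,c_d$ with $y_2=c_2'c_1^{-1}x_2$, $y_k=c_kx_k$ for $k\ge3$, and $c_2'\prod_{k\ge3}c_k=1$. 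Set $c_2=c_2'c_1^{-1}$. Then $\prod_{k=1}^d c_k=c_1c_1^{-1}c_2'\prod_{k\ge3}c_k=1$, which closes the induction.

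The only real obstacle is bookkeeping. One point is justifying that the STP of column vectors equals the Kronecker product. The other is that the reshaping into a matrix matches the index ordering of Proposition \ref{p2.2.3}. Both are immediate from Definition \ref{d2.1.1} and (\ref{2.2.5}). The hypothesis $n_k>1$ is not needed for the argument.
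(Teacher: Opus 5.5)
Your argument is correct and complete. The identification of the STP of column vectors with the Kronecker product, the rank-one case $x_1x_2^{\mathrm{T}}=y_1y_2^{\mathrm{T}}\neq 0$, and the induction through the grouping $x_1\ltimes(\ltimes_{k=2}^dx_k)$ all check out. You are also right that $n_k>1$ is never used. The paper itself gives no proof of Proposition \ref{p2.3.2}. It is stated without proof among the basic properties of hypervectors (the subsection draws on \cite{che25}) and then simply used.

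The paper's own tools suggest a different, non-inductive route: the mechanism behind the MDA (Proposition \ref{p2.3.4}, the projectors (\ref{3.1.2}), Theorem \ref{p3.1.2}).
\begin{itemize}
\item[(a)] Let $(e_1,\dots,e_d)$ be the multi-index of the head entry of $x$. That entry equals $\prod_r(x_r)_{e_r}=\prod_r(y_r)_{e_r}\neq 0$, so every factor $(x_r)_{e_r}$ and $(y_r)_{e_r}$ is nonzero.
\item[(b)] Take the fibre of $x$ in the $s$-th index through this entry, i.e.\ the entries at $(e_1,\dots,e_{s-1},i,e_{s+1},\dots,e_d)$, $i\in[1,n_s]$, which is what $\Xi^e_{s,n}x$ extracts. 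It equals
\[
\Bigl(\prod_{r\neq s}(x_r)_{e_r}\Bigr)x_s=\Bigl(\prod_{r\neq s}(y_r)_{e_r}\Bigr)y_s .
\]
\item[(c)] Hence $y_s=c_sx_s$ for all $s$ simultaneously, with explicit $c_s$. Then $x=(\prod_sc_s)x$ and $x\neq0$ give $\prod_sc_s=1$.
\end{itemize}
Any nonzero entry would serve in place of the head entry; the head entry is the canonical choice that produces the monic representative used by the MDA.

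That route gives explicit scalars and explains directly why the MDA recovers the components. Yours avoids the multi-index bookkeeping and reduces everything to the elementary uniqueness of rank-one matrix factorizations, at the price of an induction on $d$.
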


Using Proposition \ref{p2.3.2}, the following result is obvious.

\begin{cor}\label{c2.3.3} Let the hypervector $x=\ltimes_{k=1}^sx_k$, where $x_k\in \R^n$. Then $x$  uniquely determines a subspace
$$
V_x:=\Span\{x_1,\cdots,x_s\}\subset \R^n.
$$
\end{cor}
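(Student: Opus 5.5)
The plan is to read off Corollary \ref{c2.3.3} directly from the uniqueness statement of Proposition \ref{p2.3.2}. The subspace $V_x$ is defined through a choice of components $x_1,\cdots,x_s$, and such a choice is not unique. So the claim to prove is that the span does not depend on which factorization $x=\ltimes_{k=1}^s x_k$ is used.

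First I will dispose of the degenerate case $x=0$. Then some component $x_k$ must vanish. Since the components are not determined, the subspace is not well defined in this case. I will therefore assume $x\neq 0$, as Proposition \ref{p2.3.2} does, or alternatively adopt the convention that $V_0$ is excluded. With $x\neq 0$, every component satisfies $x_k\neq 0$. Indeed, if some $x_k=0$, the Kronecker product would vanish, because it is multilinear in each factor.

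Next, I take two factorizations $x=\ltimes_{k=1}^s x_k=\ltimes_{k=1}^s y_k$ with $x_k,y_k\in\R^n$. Proposition \ref{p2.3.2}, applied with $n_k=n$ for all $k$, gives scalars $c_k$ with $y_k=c_kx_k$ and $\prod_k c_k=1$. The product condition forces each $c_k\neq 0$. Hence each $y_k$ is a nonzero multiple of $x_k$, so $\Span\{y_k\}=\Span\{x_k\}$ for every $k$. Summing over $k$ gives
$$\Span\{y_1,\cdots,y_s\}=\Span\{x_1,\cdots,x_s\}.$$
Thus $V_x$ depends only on $x$, which is the asserted uniqueness.

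There is no real obstacle here; the corollary is immediate. The only point needing care is the non-vanishing of the $c_k$, and that follows from $\prod_k c_k=1$. The hypothesis $n>1$ carried over from Definition \ref{d2.3.1} plays no role beyond making Proposition \ref{p2.3.2} applicable.
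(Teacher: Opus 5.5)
Your proof is correct and follows the paper's own argument: the paper simply says the corollary is obvious from Proposition \ref{p2.3.2}, and you supply the missing detail that $\prod_k c_k=1$ forces every $c_k\neq 0$, so $\Span\{y_k\}=\Span\{x_k\}$ for each $k$. You are also right that the hypothesis $x\neq 0$ has to be carried over from Proposition \ref{p2.3.2}, since for $x=0$ the components, and hence $V_x$, are not determined.
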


\begin{dfn}\label{d2.3.3} Let $0\neq x\in \R^n$. The head index of $x$ is defined by
\begin{align}\label{2.3.2}
e(x)=\min\{i\;|\;x(i)\neq 0\}.
\end{align}
and $x_{e(x)}$ is called the head value.
\end{dfn}

\begin{prp}\label{p2.3.4} Let $0\neq x\in \R^n$, $n=\prod_{k=1}^dn_k$, and assume
$$
x=\ltimes_{k=1}^dx_k,\quad x_k\in \R^{n_k}.
$$
Set $k=e(x)$. Then
$$
e(x_s)=i_s,\quad s\in [1,d],
$$
where  $i_s$ are provided by formula (\ref{2.2.6}).
\end{prp}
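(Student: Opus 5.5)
The plan is to prove the statement by induction on $d$, using the explicit entry formula for a semi-tensor (Kronecker) product of column vectors together with the index conversion of Proposition \ref{p2.2.3}. For column vectors, $x_1\ltimes x_2=x_1\otimes x_2$, so by associativity (Proposition \ref{p2.1.2}) we have $x=x_1\otimes\cdots\otimes x_d$. Entrywise, if $k$ corresponds to the multi-index $(i_1,\cdots,i_d)$ via (\ref{2.2.5}), then
\begin{align*}
x(k)=x_1(i_1)x_2(i_2)\cdots x_d(i_d).
\end{align*}
The claim to establish is that the first nonzero entry of $x$ sits at the multi-index $(e(x_1),\cdots,e(x_d))$. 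Then, since (\ref{2.2.6}) is exactly the inverse of (\ref{2.2.5}), the multi-index obtained from $k=e(x)$ by (\ref{2.2.6}) is $(e(x_1),\cdots,e(x_d))$, which gives $e(x_s)=i_s$.

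The key observation is that the map $(i_1,\cdots,i_d)\mapsto k$ in (\ref{2.2.5}) is order preserving from the lexicographic (alphabetic) order of Definition \ref{d2.2.2} to the natural order on $[1,n]$. This is the usual mixed-radix property. It can be checked in the case $d=2$ from $k=(i-1)n+j$ with $1\le j\le n$, and extended to general $d$ by associativity of the index product (\ref{2.2.8}).

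Hence $e(x)$ is the lexicographically smallest multi-index $(i_1,\cdots,i_d)$ with $\prod_s x_s(i_s)\neq 0$. Since $x\neq 0$, every $x_s\neq0$, so each $e(x_s)$ is defined. A product of real numbers is nonzero iff each factor is nonzero. The set of multi-indices with nonzero product is therefore the Cartesian product $\prod_s\{i_s: x_s(i_s)\neq0\}$. The lexicographic minimum of a Cartesian product is obtained by minimizing each coordinate separately, giving $(e(x_1),\cdots,e(x_d))$.

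The only delicate point is bookkeeping: verifying that the recursion (\ref{2.2.6}) indeed inverts (\ref{2.2.5}), so that the "$i_s$ provided by (\ref{2.2.6})" are the mixed-radix digits of $k-1$. I would handle this by writing
\begin{align*}
k-1=\sum_{s=1}^d (i_s-1)\prod_{t>s}n_t,
\end{align*}
and noting that repeated integer division by $n_d,n_{d-1},\cdots$ peels off $i_d-1,i_{d-1}-1,\cdots$ in turn, because $0\le i_s-1<n_s$. Everything else is immediate, so I would present the argument directly rather than formally by induction. Note also that $e(x)$ does not depend on the choice of factorization, consistent with Proposition \ref{p2.3.2}: rescaling each $x_s$ by a nonzero constant does not change $e(x_s)$.
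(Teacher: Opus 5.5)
Your argument is correct and complete. The paper gives no proof of Proposition \ref{p2.3.4}: it is listed among the ``basic properties'' of hypervectors taken from \cite{che25}, so there is no in-paper argument to compare yours with.

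Your route is the natural one, and every step holds:
\begin{itemize}
\item the semi-tensor product of column vectors is their Kronecker product, so $x(k)=\prod_s x_s(i_s)$;
\item (\ref{2.2.5}) is an order isomorphism from the lexicographic order on multi-indices to the natural order on $[1,n]$;
\item the support of $x$ is the Cartesian product of the supports of the $x_s$, each nonempty because $x\neq 0$, and its lexicographic minimum is the coordinatewise minimum $(e(x_1),\dots,e(x_d))$;
\item (\ref{2.2.6}) inverts (\ref{2.2.5}) by peeling off the mixed-radix digits of $k-1$.
\end{itemize}

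The only step you leave informal is order preservation for general $d$. The induction via (\ref{2.2.8}) works, since index products are defined with alphabetic ordering. If you prefer a one-line check, suppose two multi-indices first differ at position $r$ with $i_r<j_r$, and write $k(i)$, $k(j)$ for the corresponding values of (\ref{2.2.5}). Then
\[
k(j)-k(i)\ \ge\ \prod_{t>r}n_t-\sum_{s>r}(n_s-1)\prod_{t>s}n_t=1,
\]
because the sum telescopes to $\prod_{t>r}n_t-1$.
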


\section{KPD of Vector Form Hypermatrices}

\subsection{Exact KPD of Vector Form Hypermatrices}

\begin{dfn}\label{d3.1.1} Given a hypermatrix ${\cal A}=(a_{i_1,\cdots.i_d}\in \R^{n_1\times \cdots\times n_d}$.  The exact KPD means to find $x_s\in \R^{n_s}$, $s\in [1,d]$ such that
\begin{align}\label{3.1.1}
V({\cal A})=\ltimes_{s=1}^dx_s.
\end{align}
\end{dfn}

Assume ${\cal A}$ is given, then $V:=V({\cal A})\in \R^n$ is known, where $n=\prod_{s=1}^dn_s$.
If $V$ is decomposed to $V=\ltimes_{s=1}^dx_i$, using Proposition \ref{p2.3.4}, all the head indexes $e_s:=e(x_s)$ are known.

Define a set of projectors as
 \begin{align}\label{3.1.2}
\Xi^e_{s,n}:=\ltimes_{r=1}^{s-1}[\d_{n_r}^{e_r}]^{\mathrm{T}}\otimes I_{n_s}\otimes \ltimes_{r=s+1}^{d}[\d_{n_r}^{e_r}]^{\mathrm{T}},\quad s\in [1,d].
\end{align}

Then we have the following result \cite{che25,chepr}, which is called the monic decomposition algorithm (MDA).

\begin{thm}\label{p3.1.2} Let $V=V({\cal A})\in \R^n$. ${\cal A}$ with the head index $e$ and head value $h(V)$ and set
$V_0=V/h(V)$. Then ${\cal A}$ is (vector form) exactly decomposable, if and only if,
 \begin{align}\label{3.1.3}
V=h(V)\ltimes_{s=1}^d\Xi^e_{s,n}V_0.
\end{align}
\end{thm}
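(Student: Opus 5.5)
The sufficiency direction is immediate. If (\ref{3.1.3}) holds, set $x_1=h(V)\,\Xi^e_{1,n}V_0$ and $x_s=\Xi^e_{s,n}V_0$ for $s\in[2,d]$. Each $\Xi^e_{s,n}$ is a $n_s\times n$ matrix, so $x_s\in\R^{n_s}$. Then (\ref{3.1.3}) reads $V=\ltimes_{s=1}^dx_s$, which is exactly an exact KPD in the sense of Definition \ref{d3.1.1}. The substance is therefore the necessity direction, and the plan is to compute $\Xi^e_{s,n}V_0$ explicitly when $V$ is decomposable.

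Assume $V=\ltimes_{s=1}^dx_s$ with $V\neq 0$; the zero case is trivial or excluded by the existence of $e$. By Proposition \ref{p2.3.4}, with $k=e(V)$ and $(e_1,\dots,e_d)$ obtained from $k$ via (\ref{2.2.6}), we have $e(x_s)=e_s$. Denote the head value of $x_s$ by $h_s:=x_s(e_s)\neq 0$. The first observation is that the head value of $V$ is
$$
h(V)=V(k)=\prod_{s=1}^d x_s(e_s)=\prod_{s=1}^d h_s .
$$
This follows from (\ref{2.2.5}), since the $k$-th entry of a Kronecker product of vectors is the product of the entries indexed by $(e_1,\dots,e_d)$.

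The key computation is the action of the projector on a pure product. Here we use the mixed-product rule for $\otimes$, recalling that for column vectors STP coincides with $\otimes$. This gives
$$
\Xi^e_{s,n}V=\prod_{r\neq s}[\d_{n_r}^{e_r}]^{\mathrm T}x_r\;\cdot\; x_s=\Bigl(\prod_{r\neq s}h_r\Bigr)x_s .
$$
Dividing by $h(V)$ yields $\Xi^e_{s,n}V_0=x_s/h_s$, which is the monic normalization of $x_s$. Multiplying these over $s$ gives
$$
\ltimes_{s=1}^d\Xi^e_{s,n}V_0=\frac{\ltimes_s x_s}{\prod_s h_s}=\frac{V}{h(V)},
$$
so multiplying by $h(V)$ gives (\ref{3.1.3}).

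The main obstacle is bookkeeping rather than ideas. We must check that the block structure in (\ref{3.1.2}) really is $\bigotimes_{r<s}(\d^{e_r}_{n_r})^{\mathrm T}\otimes I_{n_s}\otimes\bigotimes_{r>s}(\d^{e_r}_{n_r})^{\mathrm T}$ when read with STP (the row vectors interact only via Kronecker structure), so that the mixed-product rule applies. We also need to confirm that the index $k=e(V)$ splits via (\ref{2.2.6}) consistently with the ordering in (\ref{2.2.5}); this is what Proposition \ref{p2.3.4} guarantees. Once these are confirmed, both directions follow as above. Note that the result also shows, via Proposition \ref{p2.3.2}, that the components are determined up to the scalars $h_s$.
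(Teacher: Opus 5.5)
Your proof is correct and follows the route the paper sets up. The paper defers the actual proof to \cite{che25,chepr}, but its lead-in does the same thing you do: Proposition~\ref{p2.3.4} locates the head indices $e_s$ of the factors, and the projectors $\Xi^e_{s,n}$ extract each factor normalized by its head value. Proposition~\ref{p2.3.4} is not even essential, since $x_r(e_r)\neq 0$ already follows from $h(V)=\prod_r x_r(e_r)\neq 0$.

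One correction to your bookkeeping remark. Under the STP, row vectors do \emph{not} Kronecker-multiply in order: for row vectors one has $\xi\ltimes\eta=\eta\otimes\xi$. So (\ref{3.1.2}) must be read as $[\ltimes_{r<s}\delta^{e_r}_{n_r}]^{\mathrm{T}}\otimes I_{n_s}\otimes[\ltimes_{r>s}\delta^{e_r}_{n_r}]^{\mathrm{T}}$, as Example~\ref{e3.1.3} does. By Proposition~\ref{p2.1.2}(iii), this is exactly the forward-order Kronecker structure your mixed-product computation needs.
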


We give a simple example to describe this.

\begin{exa}\label{e3.1.3} Let ${\cal A}=(a_{i_1,i_2,i_3,i_4})\in \R^{4\times 2\times 2\times 3}$, where
$$
\begin{array}{llll}
a_{3,1,2,2}=4,&a_{3,1,2,3}=2,&a_{3,2,2,2}=8,&a_{3,2,2,3}=4,\\
a_{4,1,2,2}=-4,&a_{4,1,2,3}=-2,&a_{4,2,2,2}=-8,&a_{4,2,2,3}=-4,\\
\end{array}
$$
and all others are $0$. Then we have
$$
V=[\underbrace{0,\cdots,0}_{28},4,2, \underbrace{0,\cdots,0}_{4},8,4, \underbrace{0,\cdots,0}_{4},-4,-2, \underbrace{0,\cdots,0}_{4},-8,-4]^{\mathrm{T}}.
$$
$$
e=29;\quad h(V)=4.
$$
Using formula (\ref{2.2.6}), we have
$$
\begin{array}{l}
k_0=e-1=28,\\
k_1=\left[\frac{k_0}{n_4}\right]=\left[\frac{28}{3}\right]=9,\\
e_4=k_0-k_1n_4+1=2,\\
k_2=\left[\frac{k_1}{n_3}\right]=\left[\frac{9}{2}\right]=4,\\
e_3=k_2-k_2n_3+1=2,\\
k_3=\left[\frac{k_2}{n_2}\right]=\left[\frac{4}{2}\right]=2,\\
e_2=k_2-k_3n_2+1=1,\\
e_1=k_3+1=3.
\end{array}
$$
Then we construct the projectors as
\begin{align}\label{3.1.4}
\begin{array}{l}
\Xi^e_{1,n}:=I_{n_1}\otimes [\d_{n_2}^{e_2}\otimes \d_{n_3}^{e_3}\otimes \d_{n_4}^{e_4}]^{\mathrm{T}}
=I_{4}\otimes [\d_{2}^{1}\otimes \d_{2}^{2}\otimes \d_{3}^{2}]^{\mathrm{T}},\\
\Xi^e_{2,n}:=[\d_{n_1}^{e_1}]^{\mathrm{T}}\otimes I_{n_2}\otimes [\d_{n_3}^{e_3}\otimes \d_{n_4}^{e_4}]^{\mathrm{T}}
=[\d_{4}^3]^{\mathrm{T}} \otimes I_{2}\otimes [\d_{2}^{2}\otimes \d_{3}^{2}]^{\mathrm{T}},\\
\Xi^e_{3,n}:=[\d_{n_1}^{e_1}\otimes \d_{n_2}^{e_2}]^{\mathrm{T}}\otimes I_{n_3}\otimes [\d_{n_4}^{e_4}]^{\mathrm{T}}
=[\d_{4}^3\otimes\d_{2}^1]^{\mathrm{T}} \otimes I_{2}\otimes [\d_{3}^{2}]^{\mathrm{T}},\\
\Xi^e_{4,n}:=[\d_{n_1}^{e_1} \otimes \d_{n_2}^{e_2}\otimes \d_{n_3}^{e_3}]^{\mathrm{T}}\otimes I_{n_4}
=[\d_{4}^3\otimes \d_2^1\otimes\d_2^2]^{\mathrm{T}} \otimes I_{3}.\\
\end{array}
\end{align}
It follows that
$$
\begin{array}{l}
x_1=\Xi^e_{1,n}V_0=\Xi^e_{1,n}(V/h(V))=(0,0,1,-1)^{\mathrm{T}},\\
x_2=\Xi^e_{2,n}V_0=(1,2)^{\mathrm{T}},\\
x_3=\Xi^e_{3,n}V_0=(0,1)^{\mathrm{T}},\\
x_4=\Xi^e_{4,n}V_0=(0,1,0.5)^{\mathrm{T}}.\\
\end{array}
$$
Finally, it is ready to verify that
$$
V=h(V)\ltimes_{s=1}^4x_s.
$$
Hence, ${\cal A}$ is exact decomposable.

\end{exa}

\subsection{NKP of Vector Form Hypermatrices}

\begin{dfn}\label{d3.2.1} Given a hypermatrix ${\cal A}=(a_{i_1,\cdots.i_d})\in \R^{n_1\times \cdots\times n_d}$.  The NKP means to find $x^*_s\in \R^{n_s}$, $s\in [1,d]$ such that
\begin{align}\label{3.2.1}
(x^*_1,\cdots,x^*_d)=\argmin_{x_1,\cdots,x_d}\|V({\cal A})-\ltimes_{s=1}^dx_s\|_F.
\end{align}
\end{dfn}

It is easy to verify that  $\R^{n_1\ltimes \cdots\ltimes n_d}\subset\R^{n_1\times \cdots\times n_d}$ is a closed subset, the solution of NKP exists. According to Proposition \ref{p2.3.2}, up to a set of constant product parameters, the solution is unique.

\begin{lem}\label{l3.2.2} Assume $x^*_s\in \R^{n_s}$, $s\in [1,d]$ is the least square solution of (\ref{3.1.1}),  for each $s$,   we can fix  $x_k=x^*_k$, $k\neq s$, then
\begin{align}\label{3.2.2}
x^*_s=\argmin_{x_s}\|V({\cal A})-x^*_1\ltimes \cdots\ltimes x^*_{s-1}\ltimes x_s\ltimes x^*_{s+1}\ltimes \cdots\ltimes x^*_d\|_F.
\end{align}
\end{lem}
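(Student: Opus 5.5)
This is a coordinatewise-optimality statement: a global minimizer is also a minimizer in each single block when the other blocks are frozen. I would argue by contradiction. Write $F(x_1,\cdots,x_d):=\|V({\cal A})-\ltimes_{k=1}^dx_k\|_F$. By the remark after Definition \ref{d3.2.1}, the product set $\R^{n_1\ltimes\cdots\ltimes n_d}$ is closed, so a global minimizer $(x^*_1,\cdots,x^*_d)$ exists. Fix $s\in[1,d]$ and let $\phi_s(x_s):=F(x^*_1,\cdots,x^*_{s-1},x_s,x^*_{s+1},\cdots,x^*_d)$. Suppose some $\tilde x_s$ satisfies $\phi_s(\tilde x_s)<\phi_s(x^*_s)$. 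Then the tuple obtained from $(x^*_1,\cdots,x^*_d)$ by replacing $x^*_s$ with $\tilde x_s$ is admissible in (\ref{3.2.1}) and gives a strictly smaller value of $F$. This contradicts the global minimality, so $x^*_s$ minimizes $\phi_s$, which is (\ref{3.2.2}).

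To make the restricted problem explicit, and to prepare for the algorithm, I would next show that $\phi_s$ is a linear least squares problem in $x_s$. Write the product as a linear map applied to $x_s$. By Proposition \ref{p2.1.3} and the swap matrix of Proposition \ref{p2.1.5}, set $a:=\ltimes_{k<s}x^*_k\in\R^{p}$ and $b:=\ltimes_{k>s}x^*_k\in\R^{q}$. Then
$$
a\ltimes x_s\ltimes b=a\otimes x_s\otimes b=(a\otimes I_{n_s}\otimes b)\,x_s=:\Phi_s x_s .
$$
So $\phi_s(x_s)=\|V({\cal A})-\Phi_sx_s\|$, a convex quadratic in $x_s$. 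Its minimizers are exactly the solutions of the normal equations $\Phi_s^{\mathrm T}\Phi_s x_s=\Phi_s^{\mathrm T}V({\cal A})$.

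When $a\neq 0$ and $b\neq 0$, we have $\Phi_s^{\mathrm T}\Phi_s=\|a\|^2\|b\|^2 I_{n_s}$, so the minimizer is unique:
$$
x_s=\frac{(a\otimes I_{n_s}\otimes b)^{\mathrm T}V({\cal A})}{\|a\|^2\|b\|^2}.
$$
The argmin in (\ref{3.2.2}) is then a single point, and the contradiction argument identifies it with $x^*_s$.

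The main obstacle is interpreting ``$=\argmin$'' when the minimizer is not unique. This happens exactly when some $x^*_k=0$ with $k\neq s$, in which case $\Phi_s=0$ and every $x_s$ is a minimizer. In that degenerate case I would read (\ref{3.2.2}) as the membership $x^*_s\in\argmin$, and the contradiction argument already proves that. Alternatively, I would dispose of the case directly: if $V({\cal A})\neq 0$, the zero product is not optimal, because a suitable scaling of $\delta_n^{e}$ (with $e=e(V({\cal A}))$ as in Definition \ref{d2.3.3}, or of any coordinate where $V({\cal A})$ is nonzero) gives a strictly smaller error. Hence at a global minimizer all $x^*_k\neq0$, $\Phi_s$ has full column rank, and the argmin is the unique point above. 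This also matches the uniqueness-up-to-scaling remark based on Proposition \ref{p2.3.2}.
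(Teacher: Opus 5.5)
Your proof is correct and follows the paper's argument: if some $\tilde{x}_s$ did strictly better in (\ref{3.2.2}), replacing $x^*_s$ by it would beat the assumed global minimizer. Your extra analysis goes beyond the paper's proof, which implicitly reads ``$=\argmin$'' as membership; it consists of the normal equations with $\Phi_s^{\mathrm{T}}\Phi_s=\|a\|^2\|b\|^2 I_{n_s}$, and the observation that $V({\cal A})\neq 0$ forces every $x^*_k\neq 0$, so the argmin is a single point. That analysis is sound and agrees with the formula (\ref{3.2.6}) the paper derives afterwards.
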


\noindent{\it Proof.} Assume $x^*_s$ is not the least square solution for (\ref{3.2.2}) and say, $\tilde{x}_s$ is the least square solution of (\ref{3.2.2}). Then it is easy to verify that $(x^*_1,\cdots,x^*_{s-1},\tilde{x}_s,x^*_{s+1},\cdots,x^*_d)$ is better than $(x^*_1,\cdots,x^*_d)$, which is a contradiction.

\hfill $\Box$

Consider the problem of minimizing the following Frobenius norm.
\begin{align}\label{3.2.3}
\begin{array}{l}
\min\left(\|V-\ltimes_{s=1}^dx_s\|_F\right)=\min\left(\|V-\ltimes_{s=1}^{d-1}x_s\ltimes x_d\|_F\right)\\
=\min \left(\|V-(\ltimes_{s=1}^{d-1} x_s \otimes I_{n_d}) x_d\|_F\right).
\end{array}
\end{align}

According to Lemma \ref{l3.2.2},  fix $x_s$, $s\in [1,d-1]$, then it becomes the problem of minimizing a linear form of $x_d$, then it is well known that the least square solution satisfies
\begin{align}\label{3.2.4}
\begin{array}{l}
(\ltimes_{s=1}^{d-1}x_s\otimes I_{n_d})^{\mathrm{T}}V\\
=(\ltimes_{s=1}^{d-1}x_s\otimes I_{n_d})^{\mathrm{T}}(\ltimes_{s=1}^{d-1}x_s\otimes I_{n_d})x_d\\
=\prod_{s=1}^{d-1}\|x_s\|^2x_d.\\
\end{array}
\end{align}
That is,
\begin{align}\label{3.2.5}
\begin{array}{l}
x_d=\frac{1}{\prod_{s=1}^{d-1}\|x_s\|^2}(\ltimes_{s=1}^{d-1}x_s\otimes I_{n_d})^{\mathrm{T}}V\\
~~=\frac{1}{\prod_{s=1}^{d-1}\|x_s\|^2}[(\ltimes_{s=1}^{d-1}x_s)^{\mathrm{T}})\otimes I_{n_d})]V\\
~~=\frac{1}{\prod_{s=1}^{d-1}\|x_s\|^2}[(\ltimes_{s=1}^{d-1}x_s)^{\mathrm{T}})\otimes I_{n_d})]V\\
~~=\frac{1}{\prod_{s=1}^{d-1}\|x_s\|^2} (\ltimes_{s=1}^{d-1}x_s)^{\mathrm{T}}) V\\
\end{array}
\end{align}

Next, we consider $x_s$ and set $u=\prod_{i=1}^{s-1}n_i$ and $v=\prod_{i=s+1}^dn_i$. Using Proposition \ref{p2.1.3} and swap matrix, we have
$$
\begin{array}{l}
\ltimes_{i=1}^dx_i=\ltimes_{i=1}^{s-1}x_i x_s \ltimes_{i=s+1}^dx_i\\
=\ltimes_{i=1}^{s-1}x_i W_{[v,n_s]}\ltimes_{i=s+1}^dx_i x_s\\
=(I_u\otimes W_{[v,n_s]}) \ltimes_{i=1,i\neq s}^dx_i x_s\\
\end{array}
$$
Putting this expression into (\ref{3.2.3}), a similar argument as in (\ref{3.2.4}) leads to
\begin{align}\label{3.2.6}
x_s=\frac{1}{\prod_{i=1,i\neq s}^{d}\|x_i\|^2}\left[ (\ltimes_{i=1}^{s-1}x_i)^{\mathrm{T}}\otimes I_{n_s}\otimes
 (\ltimes_{i=s+1}^{d}x_i)^{\mathrm{T}}\right]V,\quad s\in [1,d].
 \end{align}
Note that it is easy to verify that (\ref{3.2.5}) is a special case of (\ref{3.2.6}), so we do not need (\ref{3.2.5}) any more.

Summarizing the above argument, we conclude the following result.

\begin{prp}\label{p3.2.3} $(x_1,\cdots,x_s)$ is local least square solution of NKP, i.e., satisfies (\ref{3.2.1}) locally, if and only if,
it satisfies (\ref{3.2.6}).
\end{prp}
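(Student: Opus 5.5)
The plan is to read ``local least square solution'' as a point at which no single component $x_s$ can be improved with the others held fixed. By convexity of each block problem, this is the same as the vanishing of the gradient of
$$
f(x_1,\cdots,x_d):=\|V-\ltimes_{i=1}^dx_i\|_F^2 .
$$
I assume throughout that all $x_i\neq 0$. Otherwise $\ltimes_{i=1}^dx_i=0$, formula (\ref{3.2.6}) is undefined, and that degenerate case is set aside. The proof then reduces to three things: writing $f$ as a quadratic in each block separately, computing its normal equations, and showing that these are exactly (\ref{3.2.6}).

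\emph{Step 1 (linear parametrization of one block).} Fix $s$ and set $a:=\ltimes_{i=1}^{s-1}x_i\in\R^u$ and $b:=\ltimes_{i=s+1}^{d}x_i\in\R^v$. For column vectors the STP is the Kronecker product, so the mixed-product rule gives
$$
\ltimes_{i=1}^dx_i=a\otimes x_s\otimes b=(a\otimes I_{n_s}\otimes b)\,x_s=:M_sx_s .
$$
This is the same identity the paper obtains through Proposition \ref{p2.1.3} and the swap matrix; there the factor $I_u\otimes W_{[v,n_s]}$ is orthogonal by Proposition \ref{p2.1.5}, so either route works. Then
$$
M_s^{\mathrm{T}}M_s=(a^{\mathrm{T}}a)\otimes I_{n_s}\otimes (b^{\mathrm{T}}b)=\prod_{i\neq s}\|x_i\|^2\, I_{n_s},
$$
and $M_s^{\mathrm{T}}=a^{\mathrm{T}}\otimes I_{n_s}\otimes b^{\mathrm{T}}$, which is precisely the bracket appearing in (\ref{3.2.6}).

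\emph{Step 2 (normal equations).} With the other blocks fixed, $f=\|V-M_sx_s\|^2$ is a strictly convex quadratic in $x_s$, because $M_s^{\mathrm{T}}M_s$ is a positive multiple of the identity. Its gradient is
$$
\nabla_{x_s}f=2\left(M_s^{\mathrm{T}}M_sx_s-M_s^{\mathrm{T}}V\right).
$$
Hence $x_s$ is the unique minimizer of the block problem (\ref{3.2.2}) if and only if $\nabla_{x_s}f=0$. By Step 1 this is the same as
$$
x_s=\Big(\prod_{i\neq s}\|x_i\|^2\Big)^{-1}M_s^{\mathrm{T}}V,
$$
which is (\ref{3.2.6}) for that $s$.

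\emph{Step 3 (both directions).} Necessity: if $(x_1,\cdots,x_d)$ is a local minimizer of (\ref{3.2.1}), then the full gradient of the smooth function $f$ vanishes. Alternatively, the argument of Lemma \ref{l3.2.2} restricted to a neighbourhood shows each block is block-optimal. Either way Step 2 gives (\ref{3.2.6}) for every $s$. Sufficiency: if (\ref{3.2.6}) holds for all $s$, then every block gradient vanishes, so the point is stationary and each $x_s$ globally minimizes $f$ with the others fixed. That is the sense of ``local'' solution intended here, and it is why the algorithm searches stationary values and then compares them.

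The main obstacle is conceptual rather than computational. Joint stationarity, and even block-wise optimality, does not imply a local minimum of the multilinear problem, since saddle points exist. The ``if'' direction is therefore only valid under the block-wise (stationary-point) interpretation, and I would state that interpretation explicitly. I would also note the scaling invariance of Proposition \ref{p2.3.2}: solutions are only determined up to constants $c_k$ with $\prod_k c_k=1$, and (\ref{3.2.6}) is invariant under this rescaling, so no uniqueness claim should be attached to it.
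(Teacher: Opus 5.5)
Your proposal is correct and is essentially the paper's argument: fix all blocks but $x_s$, write the product as a linear map of $x_s$ with Gram matrix $\prod_{i\neq s}\|x_i\|^2 I_{n_s}$, and read (\ref{3.2.6}) off the normal equations; your mixed-product form $a\otimes I_{n_s}\otimes b$ simply replaces the paper's swap-matrix rewriting. Your caveat about the ``if'' direction is also right, because the paper's derivation likewise only establishes block-wise optimality (stationarity), and genuine saddles do occur: for $d=2$ and $V=(2,0,0,1)^{\mathrm{T}}$, the point $x_1=x_2=\d_2^2$ satisfies (\ref{3.2.6}) with squared error $4$, yet $x_1=x_2=\d_2^2+\epsilon\,\d_2^1$ gives $4-2\epsilon^2+\epsilon^4$.
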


Next, we give a numerical algorithm, called the stationary value based algorithm (SVA), to solve (\ref{3.2.6}).

\begin{alg}\label{a3.2.4}
\begin{itemize}
\item[] Step 1.

Choose initial value randomly as (say, in MatLab)
$$
x^0_s=rand(n_s,1),\quad s\in [1,d].
$$
or
$$
x^0_s=rand(n_s,1)-0.5*ones(n_s,1),\quad s\in [1,d].
$$
\item[] Step k. Assume $x^{k-1}_s$, $s\in [1,d]$ are known.

\begin{itemize}
\item[(i)] Fix $x^{k-1}_s$, $s\in [2,d]$, and use  (\ref{3.2.6}) to solve $x^k_1$ out.
\item[(ii)] Fix $x^k_1$ and $x^{k-1}_s$, $s\in [3,d]$, and use (\ref{3.2.6}) to solve $x^k_2$ out.
\item[(iii)] In general, fix updated $x^k_i$, $i<t$ and $x^{k-1}_i$, $i>t$,  and use (\ref{3.2.6}) to solve $x^k_t$, $t\in [1,d]$.
\end{itemize}

\item[] Last Step. For a pre-assigned $0<\epsilon << 1$, if
$$
\|\ltimes_{s=1}^dx^{k}_s-\ltimes_{s=1}^dx^{k-1}_s\|_F<\epsilon,
$$
stop. Otherwise, go back to Step 1.
\end{itemize}
\end{alg}

\begin{rem}\label{r3.2.5}
\begin{itemize}
\item[(i)] Using Algorithm \ref{a3.2.4}, the square error $\|V-h(V)\ltimes_{s=1}^dx^k_s\|$ is monotonically decreasing, as $k\ra\infty$, it will converge to a stationary $(x^*_1,\cdots,x^*_d)$, which is a local minimum solution.
\item[(ii)] It is obvious that the set of  stationary points is uncountable. But numerical experiments suggest that the stationary values are few. In most cases there is only one stationary value. In this case, Algorithm  \ref{a3.2.4} provides the least square solution.
\end{itemize}
\end{rem}

We give a numerical example to depict this.

\begin{exa}\label{e3.2.5}  Let ${\cal A}=(a_{i_1,i_2,i_3,i_4})\in \R^{4\times 2\times 2\times 3}$ as in Example \ref{e3.1.3}, but with
$$
\begin{array}{llll}
a_{3,1,2,2}=-2,&a_{3,1,2,3}=3.5,&a_{3,2,2,2}=-5.2,&a_{3,2,2,3}=7.3,\\
a_{4,1,2,2}=0.5,&a_{4,1,2,3}=2,&a_{4,2,2,2}=6.5,&a_{4,2,2,3}=-5,\\
\end{array}
$$
and all others are $0$.

Set $V=V({\cal A})$, then $e(V)=29$ and $h(V)=-2$.
Let $V_0:=V/h(V)=-V/2$. Since $e(V)=29$ is the same as in Example \ref{e3.1.3}, we can use the projectors $\Xi^e_{s,n}$, $s\in[1,4]$, defined in (\ref{3.1.4}) to get
\begin{align}\label{3.2.7}
\begin{array}{l}
x_1=\Xi^e_{1,n}V_0=[0,0,1,-0.25]^{\mathrm{T}},\\
x_2=\Xi^e_{2,n}V_0=[1,2.6]^{\mathrm{T}},\\
x_3=\Xi^e_{3,n}V_0=[0,1]^{\mathrm{T}},\\
x_4=\Xi^e_{4,n}V_0=[0,1,-1.75]^{\mathrm{T}}.\\
\end{array}
\end{align}

Then we have
$$
\|V=h(V)\ltimes_{k=1}^4x_k\|_F=6.7802.
$$
Hence this ${\cal A}$ is not decomposable.

Using Algorithm \ref{a3.2.4}.  Using any initial values $x^0_1\in \R^4$,  $x^0_2\in \R^2$,$x^0_3\in \R^3$, and $x^0_4\in \R^3$, and choosing $\epsilon=10^{-4}$, each time we have different solutions, but the least square root
$$
Error=\|V-h(V)\ltimes_{s=1}^4x*_s\|_F=4.3218.
$$
We conclude that the least square error is $4.3218$. One such decomposition is
$$
\begin{cases}
x_1=(0,0,-245.3272,200.2378)^{\mathrm{T}},\\
x_2=(0.1041,0.5358)^{\mathrm{T}},\\
x_3=(0, -0.1451)^{\mathrm{T}},\\
x_4=(0,-0.3328,0.3561)^{\mathrm{T}},\\
\end{cases}
$$
\end{exa}

For ill-conditioned data, the Monte Carlo-like  simulation can provide the least square solution.

We give an example to demonstrate this.

\begin{exa}\label{e3.2.6}   Let ${\cal A}=(a_{i_1,i_2,i_3,i_4})\in \R^{4\times 2\times 2\times 3}$, where
$$
\begin{array}{llll}
a_{3,1,2,2}=2,&a_{3,2,1,1}=3.5,&a_{4,1,1,3}=-5.2,&a_{4,1,2,1}=7.3,\\
a_{4,2,1,2}=0.5.&a_{4,2,1,3}=2,&a_{4,2,2,1}=6.5,&a_{4,2,2,2}=-5,\\
\end{array}
$$
all other entries are zero.

Using Algorithm \ref{a3.2.4}, we calculate the stationary points with their Errors.

Choose $\epsilon=10^{-4}$ and
$$
\begin{array}{ll}
x^0_1=rand(4,1)-0.5*ones(4,1),&x^0_2=rand(2,1)-0.5*ones(2,1),\\
x^0_3=rand(2,1)-0.5*ones(2,1),&x^0_4=rand(3,1)-0.5*ones(3,1).
\end{array}
$$

The algorithm has been repeated 1000 times: Set $Error:=\|V-\ltimes_{s=1}^4x_s\|_F$,
the answer is:
\begin{itemize}
\item[(i)]
$$
\begin{array}{l}
 Error=7.7168,\\
 No=911,\\
\begin{cases}
x_1=(0,0,3.9440,-74.6696)^{\mathrm{T}},\\
x_2=(-0.4186,-0.4847)^{\mathrm{T}},\\
x_3=(0.0079,-0.2967)^{\mathrm{T}},\\
x_4=(-0.6808,-0.2713,-0.0035)^{\mathrm{T}},\\
\end{cases}
\end{array}
$$

\item[(ii)]

$$
\begin{array}{l}
 Error=11.7043,\\
 No=88,\\
\begin{cases}
x_1=(0,0,0.8540,73.7217)^{\mathrm{T}},\\
x_2=(0.3323,-0.1289)^{\mathrm{T}},\\
x_3=(0.4407,-0.0239)^{\mathrm{T}},\\
x_4=(-0.0221,-0.0241,-0.4798)^{\mathrm{T}},\\
\end{cases}
\end{array}
$$

\item[(iii)]
$$
\begin{array}{l}
 Error=11.7130,\\
 No=1,\\
\begin{cases}
x_1=(0,0,-2.7910,-48.6546)^{\mathrm{T}},\\
x_2=(0.6496,-0.1872)^{\mathrm{T}},\\
x_3=(0.2309,-0.0705)^{\mathrm{T}},\\
x_4=(0.1978,0.0714,0.6664)^{\mathrm{T}},\\
\end{cases}
\end{array}
$$
\end{itemize}

We have chosen
$$
\begin{array}{ll}
x^0_1=rand(4,1),&x^0_2=rand(2,1),\\
x^0_3=rand(2,1),&x^0_4=rand(3,1),
\end{array}
$$
or other initial values. All the simulations show the similar results, and no other Errors can be found.

We conclude that the smallest
$$
Error =7.7168.
$$
The $(x_1,x_2,x_3,x_4)$ in (i) is one least square solution.
\end{exa}

\subsection{Finite Sum KPD via NKP of Vector Form Hypermatrices}

Consider ${\cal A}\in \R^{n_1\times \cdots\times n_d}$. Repeatedly using NKP algorithm we can obtain a finite sum KPD. The algorithm is proposed as follows.

\begin{alg}\label{a3.3.1}
\begin{itemize}
\item[] Step1. Set $V_0=V({\cal A})$.
\item[] Step k. Assume $V_{k-1}$ is known.
\begin{itemize}
\item[] Step k.1.
Using NKP to find the NKP solution
$$
V_{k-1}\approx \ltimes_{s=1}^dx^k_s.
$$
\item[] Step k.2. Set
$$
V_k=V_{k-1}-\ltimes_{s=1}^dx^k_s.
$$

\item[] Step k.3. Check whether
$$
\|V_k\|_F<\epsilon?
$$
If ``Yes", Stop.
If ``No", Go to Step k.1.
\end{itemize}
\end{itemize}
\end{alg}

\begin{exa}\label{e3.3.2} Recall Example \ref{e3.2.5}. Using the same data and applying Algorithm \ref{a3.3.1} to it, we have the following result:
\begin{itemize}
\item[] Step 1:
$$
\begin{array}{l}
\begin{cases}
x^1_1=(0,0,-4.2261,3.4495)^{\mathrm{T}},\\
x^1_2=(0.0672,0.3460)^{\mathrm{T}},\\
x^1_3=(0,-3.3633)^{\mathrm{T}},\\
x^1_4=(0,-1.2904,1.3810)^{\mathrm{T}}.
\end{cases}\\
Error_1=4.3218.
\end{array}
$$
\item[] Step 2:
$$
\begin{array}{l}
\begin{cases}
x^2_1=(0,0,-19.0525,-25.9631)^{\mathrm{T}},\\
x^2_2=(-0.4380, -0.0489)^{\mathrm{T}},\\
x^2_3=(0,-0.6657)^{\mathrm{T}},\\
x^2_4=(0,0.0702, -0.4053)^{\mathrm{T}}.
\end{cases}\\
Error_2=1.8901.
\end{array}
$$
\item[] Step 3:
$$
\begin{array}{l}
\begin{cases}
x^3_1=(0,0,-12.4658,-13.6873)^{\mathrm{T}},\\
x^3_2=(0.0544,-0.4212)^{\mathrm{T}},\\
x^3_3=(0,0.3124)^{\mathrm{T}},\\
x^3_4=(0,0.7474,0.1320)^{\mathrm{T}}.
\end{cases}\\
Error_3=0.3104.
\end{array}
$$
\item[] Step 4:
$$
\begin{array}{l}
\begin{cases}
x^4_1=(0,0,-1.7675,1.6098)^{\mathrm{T}},\\
x^4_2=(-0.4504,-0.0589)^{\mathrm{T}},\\
x^4_3=(0,0.5602)^{\mathrm{T}},\\
x^4_4=(0,-0.4928,-0.0833)^{\mathrm{T}}.
\end{cases}\\
Error_3=0.0623
\end{array}
$$
\item[] Step 5:
$$
\begin{array}{l}
\begin{cases}
x^5_1=(0,0,-0.0162,0.0148)^{\mathrm{T}},\\
x^5_2=(-0.1803,1.3796)^{\mathrm{T}},\\
x^5_3=(0,0.6750)^{\mathrm{T}},\\
x^5_4=(0,0.5051,-2.9820)^{\mathrm{T}}.
\end{cases}\\
Error_5=1.1103*10^{-4}.
\end{array}
$$
\item[] Step 6:
$$
\begin{array}{l}
\begin{cases}
x^6_1=10^{-3}*(0,0,-0.2742,-0.3011)^{\mathrm{T}},\\
x^6_2=(0.5196,0.0679)^{\mathrm{T}},\\
x^6_3=(0,-0.6836)^{\mathrm{T}},\\
x^6_4=(0,-0.1271,0.7504)^{\mathrm{T}}.
\end{cases}\\
Error_6=1.1712*10^{-9}.
\end{array}
$$
\end{itemize}
We conclude that
$$
V({\cal A})=\dsum_{k=1}^6\ltimes_{s=1}^4x^k_s,
$$
with
$$
Error=1.1712*10^{-9},
$$
which might be caused by numerical computation error.
\end{exa}

\section{KPD for Matrix Form Hypermaxices}

\subsection{Matrix Form Decomposability}

Assume ${\cal A}=(a_{i_1,\cdots,i_d})\in \R^{n_1\times \cdots\times n_d}$ and $A=M^{\vec{j}\times \vec{k}}({\cal A})$, where
\begin{align}\label{4.1.1}
\vec{j}=\vec{i}_1\vec{i}_2\cdots\vec{i}_r,\quad \vec{k}=\vec{i}_{r+1}\vec{i}_{r+2}\cdots\vec{i}_d,\quad 1\leq r<d.
\end{align}
Set $p=\prod_{s=1}^rn_s$ and $q=\prod_{s=r+1}^dn_s$, then $A=(a^{j,k})\in {\cal M}_{p\times q}$.
By definition of $\vec{j}$ and $\vec{k}$, it is clear that the rows of $A$ are labeled by index  $\vec{i}_1\vec{i}_2\cdots\vec{i}_r$ and the columns of $A$ are labeled by index $\vec{i}_{r+1}\vec{i}_{r+2}\cdots\vec{i}_d$.

Consider $V=V_r(A)$. Then it is clear that the elements of $V$ is labelled by index of $\vec{i}_1\vec{i}_2\cdots\vec{i_d}$.

Now we assume $d=2r$ is even, and let the index partition be as in (\ref{4.1.1}), i.e., set
$$
\vec{j}_s=\vec{i}_s,\quad \vec{k}_s=\vec{i}_{r+s},\quad s\in [1,r].
$$

We consider the exact KPD problem of ${\cal A}$. That is, whether there exist $A_s\in {\cal M}_{n_s\times n_{r+s}}$, $s\in [1,r]$, such that
\begin{align}\label{4.1.2}
A=A_1\otimes A_2\otimes \cdots\otimes A_r?
\end{align}

We try to solve matrix form KPD by using the technique for vector form KPD.
To this end, we consider
$V_r(A)$ and $\ltimes_{s=1}^rV_r(A_s)$.
Note that (\ref{4.1.2}) does imply $V_r(A)=\ltimes_{s=1}^rV_r(A_s)$!

To convert the matrix form KPD to vector form KPT, assume (\ref{4.1.2}) holds, then the entries of $V_r(A)$ and
$\ltimes_{s=1}^r(V_r(A_s))$ are the same as
$$
\left\{a^1_{j_1,k_1}a^2_{j_2,k_2}\cdots a^r_{j_r,k_r}\;|\;j_s\in [1,n_s],~k_s\in[1,n_{r+s}],\quad s\in[1,r]\right\}.
$$
But in  $V_r(A)$, they are arranged in the order of $\vec{j}_1\cdots\vec{j}_r\vec{k}_1\cdots\vec{k}_r$, while in
$\ltimes_{s=1}^r(V_r(A_s))$ they are arranged in the order of $\vec{j}_1\vec{k}_1\vec{j}_2\vec{k}_2\cdots\vec{j}_r\vec{k}_r$.

To matching them we need a tool called the permutation matrix \cite{chepr}.

\begin{prp}\label{p4.1.1} Let $x_s\in \R^{n_s}$, $s\in [1,d]$, $n=\prod_{s=1}^dn_s$, and $\sigma\in {\bf S}_d$ be a permutation. Then there exists a unique matrix, $W^{\sigma}_{[n_1\times \cdots\times n_d]}\in {\cal M}_{n\times n}$, called the permutation matrix, such that
\begin{align}\label{4.1.3}
W^{\sigma}_{[n_1\times \cdots\times n_d]}\ltimes_{s=1}^dx_s=\ltimes_{s=1}^dx_{\sigma(s)}.
\end{align}
\end{prp}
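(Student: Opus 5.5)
Both existence and uniqueness come from the fact that the Kronecker products of standard basis vectors form a basis. The vectors $\ltimes_{s=1}^d\d_{n_s}^{i_s}$, for $i_s\in[1,n_s]$, are exactly the columns of $I_n$, indexed through Proposition \ref{p2.2.3} by $\vec{i}_1\cdots\vec{i}_d$. We therefore define $W^{\sigma}_{[n_1\times\cdots\times n_d]}$ column by column. Its column with index $(i_1,\cdots,i_d)$ is
$$
W^{\sigma}_{[n_1\times\cdots\times n_d]}\ltimes_{s=1}^d\d_{n_s}^{i_s}:=\ltimes_{s=1}^d\d_{n_{\sigma(s)}}^{i_{\sigma(s)}}.
$$
This is a standard basis vector of $\R^n$, with index computed by (\ref{2.2.5}) for the reordered dimensions $n_{\sigma(1)}\times\cdots\times n_{\sigma(d)}$. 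Since $\sigma$ is a bijection, distinct tuples go to distinct reordered tuples, so $W^{\sigma}$ is a genuine permutation matrix in ${\cal M}_{n\times n}$. (The statement implicitly places the target in $\R^n$ with the factor dimensions reordered; the product $n$ is unchanged.)

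Next, extend to arbitrary $x_s$ by multilinearity. Write $x_s=\sum_{i_s}x_s(i_s)\d_{n_s}^{i_s}$. The distributivity in Proposition \ref{p2.1.2}, applied to Kronecker products of vectors (here STP coincides with $\otimes$ for column vectors), gives
$$
\ltimes_{s=1}^dx_s=\sum_{i_1,\cdots,i_d}\Bigl(\prod_s x_s(i_s)\Bigr)\ltimes_{s=1}^d\d_{n_s}^{i_s}.
$$
Applying the linear map $W^{\sigma}$ and using the definition on basis vectors yields
$$
\sum_{i_1,\cdots,i_d}\Bigl(\prod_s x_{\sigma(s)}(i_{\sigma(s)})\Bigr)\ltimes_{s=1}^d\d_{n_{\sigma(s)}}^{i_{\sigma(s)}},
$$
where the scalar product has been reordered using commutativity of real multiplication. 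Expanding $\ltimes_{s=1}^dx_{\sigma(s)}$ in the same way gives the same sum, which proves (\ref{4.1.3}).

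For uniqueness, suppose two matrices $W,W'$ both satisfy (\ref{4.1.3}) for all $x_s$. In particular they agree on every $\ltimes_s\d_{n_s}^{i_s}$. These vectors run over all columns of $I_n$, so $W=I_nW=I_nW'=W'$.

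The only delicate step is bookkeeping: ensuring that the reordered basis product is well defined and that the map on indices is a bijection of $[1,n]$. I would handle this through the index formulas (\ref{2.2.5}) and (\ref{2.2.6}). Optionally, one can also give $W^\sigma$ explicitly. Every permutation is a product of adjacent transpositions, and an adjacent transposition of positions $s,s+1$ is realized by $I_u\otimes W_{[n_s,n_{s+1}]}\otimes I_v$ by Proposition \ref{p2.1.5}(ii). Composing these factors gives a concrete formula for $W^\sigma$, which is convenient for computation but is not needed for the proof.
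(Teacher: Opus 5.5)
Your proof is correct and follows the paper's route. The paper also builds $W^{\sigma}_{[n_1\times\cdots\times n_d]}$ column by column, taking the columns to be the reordered basis products $\d^{j_1}_{n_{\sigma(1)}}\cdots\d^{j_d}_{n_{\sigma(d)}}$ arranged in the order $\vec{j}_{\sigma^{-1}(1)}\cdots\vec{j}_{\sigma^{-1}(d)}$; this is exactly your assignment for column $(i_1,\cdots,i_d)$ with $j_k=i_{\sigma(k)}$, and your multilinearity extension and uniqueness argument via the columns of $I_n$ supply the verification the paper leaves implicit.
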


The $W^{\sigma}_{[n_1\times \cdots\times n_d]}$ can be constructed as follows:
Set
$$
D_{\sigma}:=\left\{\d^{j_1}_{n_{\sigma(1)}}\d^{j_2}_{n_{\sigma(2)}}\cdots \d^{j_d}_{n_{\sigma(d)}}\;|\; j_k\in [1,n_{\sigma(k)}],\; k\in [1,d]\right\}.
$$

Then arrange the vectors in $D$ in the order as
$
\vec{j}_{\sigma^{-1}(1)}\vec{j}_{\sigma^{-1}(2)}\cdots\vec{j}_{\sigma^{-1}(d)}.
$
\footnote{The toolbox of STP can be found at $http:/lsc.amss.ac.cn/\sim hsqi/soft/STP.zip$. Then
$$
W^{\sigma}_{[n_1,\cdots,n_d]}=npermax([n_1,\cdots,n_d],[\sigma(1),\cdots,\sigma(d)]).
$$
}

Now since in $V_r(A)$ the entries are arranged by the order $\vec{j}_1\cdots\vec{j}_r\vec{k}_1\cdots\vec{k}_r$ and in
$\ltimes_{s=1}^rV_r(A_s)$ the entries are arranged by the order $\vec{j}_1\vec{k_1}\cdots\vec{j}_r\vec{k}_r$. We need a permutation
$$
\sigma:(1,2,\cdots,r,r+1,\cdots,2r)\mapsto (1,r_1,2,r+2,\cdots, 2r-1,2r).
$$
Hence we construct a permutation matrix as
\begin{align}\label{4.1.4}
W^{\sigma}_{[n_1,n_2,\cdots,n_{2r}]}.
\end{align}

Then the following result is obvious.

\begin{prp}\label{p4.1.2}
(\ref{4.1.2}) holds, if and only if,
\begin{align}\label{4.1.5}
W^{\sigma}_{[n_1,n_2,\cdots,n_{2r}]} V_r(A)=\ltimes_{s=1}^d V_r(A_s).
\end{align}
\end{prp}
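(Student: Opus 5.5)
The plan is to reduce both directions of Proposition \ref{p4.1.2} to an entrywise comparison of the two vectors. I read the product on the right of (\ref{4.1.5}) as $\ltimes_{s=1}^{r}V_r(A_s)$, since only $A_1,\dots,A_r$ occur. The main tool is the defining property (\ref{4.1.3}) of the permutation matrix in Proposition \ref{p4.1.1}, used together with the fact that $W^{\sigma}_{[n_1,\cdots,n_{2r}]}$ is a genuine permutation matrix, hence invertible.

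First I make precise what $W^{\sigma}$ does to an arbitrary vector. Write $V\in\R^n$, $n=\prod_{s=1}^{2r}n_s$, in the basis
\[
\delta_{n_1}^{i_1}\ltimes\cdots\ltimes\delta_{n_{2r}}^{i_{2r}}.
\]
Apply (\ref{4.1.3}) to $x_s=\delta_{n_s}^{i_s}$, then extend by linearity. This shows that $W^{\sigma}$ sends the entry of $V$ with multi-index $(i_1,\dots,i_{2r})$ to the position with multi-index $(i_{\sigma(1)},\dots,i_{\sigma(2r)})$, counted in the reordered dimension sequence $n_{\sigma(1)},\dots,n_{\sigma(2r)}$. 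For our $\sigma$ the new order is $\vec{j}_1\vec{k}_1\cdots\vec{j}_r\vec{k}_r$. So the entry of $W^{\sigma}V_r(A)$ at position $(j_1,k_1,\dots,j_r,k_r)$ equals $a^{j_1\cdots j_r,\,k_1\cdots k_r}$, the entry of $A$ in row $\vec{j}_1\cdots\vec{j}_r$ and column $\vec{k}_1\cdots\vec{k}_r$, via Proposition \ref{p2.2.3}.

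Second, I compute the right-hand side. By definition $V_r(A_s)$ has entry $a^s_{j_s,k_s}$ at index $\vec{j}_s\vec{k}_s$, so $V_r(A_s)=\sum_{j_s,k_s}a^s_{j_s,k_s}\,\delta_{n_s}^{j_s}\delta_{n_{r+s}}^{k_s}$. The STP of column vectors coincides with the Kronecker product (Definition \ref{d2.1.1} with $t/n=1$). Expanding therefore shows that $\ltimes_{s=1}^rV_r(A_s)$ has entry $\prod_{s=1}^r a^s_{j_s,k_s}$ at index $(j_1,k_1,\dots,j_r,k_r)$. Third, the standard mixed-index formula for Kronecker products, again obtained from (\ref{2.2.5}), says the $(\vec{j}_1\cdots\vec{j}_r,\vec{k}_1\cdots\vec{k}_r)$ entry of $A_1\otimes\cdots\otimes A_r$ is $\prod_s a^s_{j_s,k_s}$.

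With these three facts both directions are immediate. (\ref{4.1.2}) holds if and only if, for every multi-index, $a^{j_1\cdots j_r,\,k_1\cdots k_r}=\prod_s a^s_{j_s,k_s}$. By steps one and two, this says exactly that the two sides of (\ref{4.1.5}) agree entrywise. Invertibility of $W^{\sigma}$ is not even needed, because the correspondence of entries is a bijection.

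The main obstacle is the first step, namely pinning down exactly how $W^{\sigma}$ reindexes a \emph{general} vector rather than a product vector. This requires checking that the intended $\sigma$, $(1,\dots,2r)\mapsto(1,r+1,2,r+2,\dots,r,2r)$, is used consistently with the convention $x_{\sigma(s)}$ in (\ref{4.1.3}). Since basis vectors are themselves products, the linearity argument settles it once the convention is fixed. If the convention turns out to be inverted, I would replace $\sigma$ by $\sigma^{-1}$, which does not affect the argument.
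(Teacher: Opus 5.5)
Your proof is correct and is essentially the paper's own argument. The paper observes that $V_r(A)$ and $\ltimes_{s=1}^rV_r(A_s)$ contain the same products $a^1_{j_1,k_1}\cdots a^r_{j_r,k_r}$, ordered by $\vec{j}_1\cdots\vec{j}_r\vec{k}_1\cdots\vec{k}_r$ versus $\vec{j}_1\vec{k}_1\cdots\vec{j}_r\vec{k}_r$, and calls the result obvious; you supply what it omits, namely extending (\ref{4.1.3}) by linearity from product vectors to the general vector $V_r(A)$ and obtaining both directions from the entrywise comparison. One trivial slip: for column vectors, Definition \ref{d2.1.1} gives $t/p=1$ (not $t/n=1$), so $x\ltimes y=(x\otimes I_p)y=x\otimes y$, which is the identity you need.
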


As an immediate consequence, we have

\begin{cor}\label{c4.1.3} A hypermatrix ${\cal A}=(a_{i_1,\cdots,i_{2r}})\in \R^{n_1\times \cdots\times n_{2r}}$ is matrix form decomposable, as shown in (\ref{4.1.2}), if and only if,
$W^{\sigma}_{[n_1,n_2,\cdots,n_{2r}]} V_r(A)$ is decomposable, with respect to $(n_1n_{r+1}),(n_2n_{r+2}),\cdots,(n_rn_{2r})$.
\end{cor}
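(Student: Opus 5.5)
The plan is to derive Corollary \ref{c4.1.3} directly from Proposition \ref{p4.1.2}, together with the observation that the map $A_s\mapsto V_r(A_s)$ is a bijection between ${\cal M}_{n_s\times n_{r+s}}$ and $\R^{n_sn_{r+s}}$. Throughout, ``decomposable with respect to $(n_1n_{r+1}),\cdots,(n_rn_{2r})$'' is read as in Definition \ref{d2.3.1} and Definition \ref{d3.1.1}: the vector $W^{\sigma}_{[n_1,\cdots,n_{2r}]}V_r(A)\in\R^{n}$ can be written as $\ltimes_{s=1}^r y_s$ with $y_s\in \R^{n_sn_{r+s}}$. The product in (\ref{4.1.5}) is then taken over $s\in[1,r]$.

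For necessity, assume (\ref{4.1.2}) holds with some $A_s\in{\cal M}_{n_s\times n_{r+s}}$. Proposition \ref{p4.1.2} gives $W^{\sigma}V_r(A)=\ltimes_{s=1}^rV_r(A_s)$. Setting $y_s:=V_r(A_s)\in\R^{n_sn_{r+s}}$ exhibits the required vector form decomposition.

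For sufficiency, assume $W^{\sigma}V_r(A)=\ltimes_{s=1}^r y_s$ with $y_s\in\R^{n_sn_{r+s}}$. Since $V_r$ is invertible, define $A_s\in{\cal M}_{n_s\times n_{r+s}}$ as the unique matrix with $V_r(A_s)=y_s$, i.e. reshape $y_s$ row by row into $n_s$ rows of length $n_{r+s}$. Then (\ref{4.1.5}) holds for these $A_s$, and the converse direction of Proposition \ref{p4.1.2} yields (\ref{4.1.2}).

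The main point to check is that the ordering convention is consistent. The entries of $V_r(A_s)$ are indexed by $\vec{j}_s\vec{k}_s$, so the entries of $\ltimes_s V_r(A_s)$ are indexed by $\vec{j}_1\vec{k}_1\cdots\vec{j}_r\vec{k}_r$. This is exactly the ordering that Proposition \ref{p4.1.2} assumes $W^{\sigma}$ produces from $V_r(A)$. Hence reshaping $y_s$ as a row stacking is the right inverse, and the block dimension $n_sn_{r+s}$ matches the $s$-th factor. Once this is noted, the corollary is a direct restatement of Proposition \ref{p4.1.2}.
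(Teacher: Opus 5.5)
Your proposal is correct and follows the paper's route: the paper presents Corollary \ref{c4.1.3} as an immediate consequence of Proposition \ref{p4.1.2}, and your argument makes explicit the one step it leaves unstated, namely that $V_r$ is a bijection ${\cal M}_{n_s\times n_{r+s}}\to\R^{n_sn_{r+s}}$, so any vector-form factors $y_s$ can be reshaped row by row into matrices $A_s$. You are also right to read the product in (\ref{4.1.5}) as running over $s\in[1,r]$; the upper limit $d$ in the paper is a typo.
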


In fact, Corollary \ref{c4.1.3} claims that the KPD of matrix form hypermatrices can be converted to the KPD of vector form hypermatrices. Hence, the results developed in previous section are all applicable to KPD of matrix form hypermatrices.

Observing Proposition \ref{p2.3.2}, we can also have the following result about the uniqueness of KPD of matrix form hypermatrices.

\begin{cor}\label{c4.1.4} Up to a set of constant product coefficients the KPD of  matrix form hypermatrices is unique. That is, assume
$$
A=A_1\otimes \cdots \otimes A_d=B_1\otimes\cdots\otimes B_d,
$$
where $A_s$ and $B_s$, $s\in[1,d]$,  are of the same dimensions. Then there exist a set of parameters $c_s$, $s\in [1,d]$, such that
$B_s=c_sA_s$, $s\in [1,d]$ and $\prod_{s=1}^dc_s=1$.
 \end{cor}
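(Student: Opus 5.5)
The plan is to reduce the statement to Proposition \ref{p2.3.2} through the vectorization and permutation machinery of Proposition \ref{p4.1.2}. I assume $A\neq 0$; if $A=0$, some factor is zero on each side and scaling is not meaningful. Write $A_s,B_s\in{\cal M}_{m_s\times n_s}$. The first step is to apply the row-stacking map $V_r$ and the permutation matrix $W^{\sigma}$ built from the interleaving permutation $\sigma$ of (\ref{4.1.4}), adapted to the dimensions $m_1,\dots,m_d,n_1,\dots,n_d$. By Proposition \ref{p4.1.2}, the identity $A=A_1\otimes\cdots\otimes A_d$ gives
$$
W^{\sigma}V_r(A)=\ltimes_{s=1}^dV_r(A_s),
$$
and in the same way
$$
W^{\sigma}V_r(A)=\ltimes_{s=1}^dV_r(B_s).
$$
The left-hand side is the same vector in both identities. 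This yields a single nonzero vector $x:=W^{\sigma}V_r(A)\in\R^{N}$, with $N=\prod_s m_sn_s$, that has two order-$d$ hypervector factorizations with component dimensions $m_1n_1,\dots,m_dn_d$.

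The second step is to invoke Proposition \ref{p2.3.2} with components $x_s=V_r(A_s)$ and $y_s=V_r(B_s)$. This gives constants $c_s$ with $V_r(B_s)=c_sV_r(A_s)$ and $\prod_s c_s=1$. Since $V_r$ is a linear bijection ${\cal M}_{m_s\times n_s}\to\R^{m_sn_s}$, we get $B_s=c_sA_s$, which is the claim.

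The main obstacle is the bookkeeping needed to confirm that Proposition \ref{p4.1.2} applies verbatim. Specifically, I must check that $V_r(A_1\otimes\cdots\otimes A_d)$ equals a fixed permutation, independent of the factors, applied to $\ltimes_s V_r(A_s)$. I would verify this entrywise. The $(j,k)$ entry of $A$, with $\vec j=\vec j_1\cdots\vec j_d$ and $\vec k=\vec k_1\cdots\vec k_d$, is $\prod_s a^s_{j_s,k_s}$. Under $V_r$ this entry sits at position $\vec j_1\cdots\vec j_d\vec k_1\cdots\vec k_d$, while in $\ltimes_sV_r(A_s)$ it sits at position $\vec j_1\vec k_1\cdots\vec j_d\vec k_d$, following the index formula (\ref{2.2.5}). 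The reindexing map between these positions is exactly the action of $W^{\sigma}$ from Proposition \ref{p4.1.1}, and it does not depend on the factors. Hence the same matrix works for both factorizations, and nonsingularity of $W^{\sigma}$ is not even needed.

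A secondary point is the hypothesis $n_k>1$ in Proposition \ref{p2.3.2}. It is harmless here. If some $m_sn_s=1$, then $A_s$ and $B_s$ are nonzero scalars, since $A\neq0$, and can be absorbed by taking $c_s=B_s/A_s$ and adjusting the remaining constants. Alternatively, the proof of Proposition \ref{p2.3.2} works unchanged for dimension $1$.
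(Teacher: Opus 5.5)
Your proposal is correct and takes the same route as the paper. The paper justifies the corollary only by ``observing Proposition~\ref{p2.3.2}'' after converting the matrix form KPD to vector form through the permutation matrix of Proposition~\ref{p4.1.2} and Corollary~\ref{c4.1.3}. Your added hypothesis $A\neq 0$ is genuinely needed (otherwise $A_1=0$ and $B_2=0$ give a counterexample), and your handling of the factors with $m_sn_s=1$ fills in details the paper leaves implicit.
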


\subsection{A Numerical Example}

The following matrix is proposed by \cite{col62}.
$$
A=\left[\begin{array}{cccccccccccccccc}
  1&17&33&49&65&81&97  &113&128&112&96&80&64&48&32  &16\\
  2&18&34&50&66&82&98  &114&127&111&95&79&63&47&31  &15\\
  3&19&35&51&67&83&99  &115&126&110&94&78&62&46&30  &14\\
  4&20&36&52&68&84&100&116&125&109&93&77&61&45&29 &13\\
  5&21&37&53&69&85&101&117&124&108&92&76&60&44&28 &12\\
  6&22&38&54&70&86&102&118&123&107&91&75&59&43&27 &11\\
  7&23&39&55&71&87&103&119&122&106&90&74&58&42&26 &10\\
  8&24&40&56&72&88&104&120&121&105&89&73&57&41&25 &9\\
  9&25&41&57&73&89&105&121&120&104&88&72&56&40&24&8\\
 10&26&42&58&74&90&106&122&119&103&87&71&55&39&23&7\\
11&27&43&59&75&91&107&123&118&102&86&70&54&38&22&6\\
12&28&44&60&76&92&108&124&117&101&85&69&53&37&21&5\\
13&29&45&61&77&93&109&125&116&100&84&68&52&36&20&4\\
14&30&46&62&78&94&110&126&115& 99&83& 67&51&35&19&3\\
15&31&47&63&79&95&111&127&114& 98&82& 66&50&34&18&2\\
16&32&48&64&80&96&112&128&113& 97&81& 65&49&33&17&1\\
\end{array}\right].
$$

\begin{itemize}
\item[(1)], $d=2$ decomposition:
\end{itemize}

Using Kronecker product singular value decomposition (KPSVD), a finite sum KPD is obtained as \cite{bat17}

\begin{align}\label{kd.2.4}
A=1154.2B_1\otimes C_1+117.98B_2\otimes C_2,
\end{align}
where
$$
B_1=
\begin{bmatrix}
0.09&0.31&0.35&0.13\\
0.10&0.33&0.34&0.12\\
0.12&0.34&0.33&0.10\\
0.13&0.35&0.31&0.09\\
\end{bmatrix}
$$
$$
C_1=
\begin{bmatrix}
0.25&0.25&0.25&0.25\\
0.25&0.25&0.25&0.25\\
0.25&0.25&0.25&0.25\\
0.25&0.25&0.25&0.25\\
\end{bmatrix}
$$
$$
B_2=
\begin{bmatrix}
0.25&0.25&-0.25&-0.25\\
0.25&0.25&-0.25&-0.25\\
0.25&0.25&-0.25&-0.25\\
0.25&0.25&-0.25&-0.25\\
\end{bmatrix}
$$
$$
C_2=
\begin{bmatrix}
-0.35&-0.13&0.09&0.31\\
-0.34&-0.12&0.10&0.33\\
-0.33&-0.10&0.12&0.34\\
-0.31&-0.09&0.31&0.35\\
\end{bmatrix}
$$

It is easy to calculate the error as
\begin{align}\label{kd.2.6}
Error=\|A-1154.2B_1\otimes C_1+117.98B_2\otimes C_2\|_F=170.45.
\end{align}

Using MDA, the result is\cite{chepr}
\begin{align}\label{kd.2.8}
\begin{array}{l}
B_1=\begin{bmatrix}
1&65&128&64\\
5&69&124&60\\
9&73&120&56\\
13&77&116&52\\
\end{bmatrix},\\
C_1=\begin{bmatrix}
1&17&33&49\\
2&18&34&50\\
3&19&35&51\\
4&20&36&52\\
\end{bmatrix},\\
B_2=\begin{bmatrix}
0&1&2.0156&1.0156\\
0.0625&1.0625&1.9531&0.9531\\
0.1250&1.1250&1.8906&0.8906\\
0.1875&1.1875&1.8281&0.8281\\
\end{bmatrix},\\
C_2=\begin{bmatrix}
0&1&2&3\\
0.0625&1.0625&2.0625&3.0625\\
0.1250&1.1250&2.1250&3.1250\\
0.1875&1.1875&2.1875&3.1875\\
\end{bmatrix}.\\
\end{array}
\end{align}

Finally, we can check that
\begin{align}\label{kd.2.10}
A=B_1\otimes C_1-1024B_2\otimes C2.
\end{align}
According to MatLab, the error
$$
Error<10^{-30}.
$$

\begin{itemize}
\item[(2)], $d=4$ decomposition:
\end{itemize}

Using KPSVD, a finite sum KPD is obtained as \cite{bat17}
\begin{align}\label{4.2.1}
\begin{array}{l}
A^*=1033.98\begin{bmatrix}0.47&0.53\\0.53&0.47\end{bmatrix}\otimes
\begin{bmatrix}0.5&0.5\\0.5&0.5\end{bmatrix}\otimes
\begin{bmatrix}0.5&0.5\\0.5&0.5\end{bmatrix}\otimes
\begin{bmatrix}0.5&0.5\\0.5&0.5\end{bmatrix}\\
~~+513.00\begin{bmatrix}0.5&-0.5\\0.5&-0.5\end{bmatrix}\otimes
\begin{bmatrix}-0.53&0.47\\-0.47&0.53\end{bmatrix}\otimes
\begin{bmatrix}0.5&0.5\\0.5&0.5\end{bmatrix}\otimes
\begin{bmatrix}0.5&0.5\\0.5&0.5\end{bmatrix}\\
~~+256.50\begin{bmatrix}0.5&-0.5\\0.5&-0.5\end{bmatrix}\otimes
\begin{bmatrix}0.5&0.5\\0.5&0.5\end{bmatrix}\otimes
\begin{bmatrix}-0.53&0.47\\-0.47&0.53\end{bmatrix}\otimes
\begin{bmatrix}0.5&0.5\\0.5&0.5\end{bmatrix}\\
~~+128.25\begin{bmatrix}0.5&-0.5\\0.5&-0.5\end{bmatrix}\otimes
\begin{bmatrix}0.5&0.5\\0.5&0.5\end{bmatrix}\otimes
\begin{bmatrix}0.5&0.5\\0.5&0.5\end{bmatrix}\otimes
\begin{bmatrix}-0.53&0.47\\-0.47&0.53\end{bmatrix}\\
\end{array}
\end{align}

It is easy to calculate the square error as
$$
ERROR:=V(A)-V(A^*))^{\mathrm{T}}(V(A)-V(A^*)=11.033.
$$

Using MDA, item number $k=4$ can not be obtained (although $k=8$ decomposition is obtained with $ERROR<10^{-10}$)\cite{chepr}.

Next, we use NKP to solve the same problem.

First, we have to construct $W^{\sigma}_{[2,2,2,2,2,2,2,2]}$, where
$$
\sigma: \vec{i}_1\vec{i}_2\vec{i}_3\vec{i}_4\vec{i}_5 \vec{i}_6\vec{i}_7\vec{i}_8\mapsto
\vec{i}_1\vec{i}_5\vec{i}_2\vec{i}_6\vec{i}_3 \vec{i}_7\vec{i}_4\vec{i}_8.
$$
Since $\vec{i}_1$ and $\vec{i}_8$ did not change, we need only to construct
$$
\begin{array}{l}
\sigma_0: \vec{i}_2\vec{i}_3\vec{i}_4\vec{i}_5 \vec{i}_6\vec{i}_7\mapsto
\vec{i}_5\vec{i}_2\vec{i}_6\vec{i}_3 \vec{i}_7\vec{i}_4\\
\Leftrightarrow (1,2,3,4,5,6)\mapsto (4,1,5,2,6,3).
\end{array}
$$
Then the permutation matrix can be constructed as
$$
\begin{array}{l}
W^{\sigma_0}_{[2,2,2,2,2,2]}=\d_{64}[1,3,9,11,33,35,41,43,2,4,10,12,34,36,42,44,5,7,13,\\
15,37,39,45,47,6,8,14,16,38,40,46,48,17,19,25,27,49,51,57,59,18,\\
20,26,28,50,52,58,60,21,23,29,31,53,55,61,63,22,24,30,32,54,56,62,64].
\end{array}
$$
Then
$$
W^{\sigma}_{[2,2,2,2,2,2,2,2]}=I_2\otimes W^{\sigma_0}_{[2,2,2,2,2,2]}\otimes I_2.
$$
Define
$$
V_0=W^{\sigma}_{[2,2,2,2,2,2,2,2]}V_r(A).
$$
\begin{itemize}
\item[(3)] Step 1:

Using Algorithm \ref{a3.3.1} to $V_0$ yields
$$
\begin{array}{l}
V_r(A^1_1)=1000*(1.1666,1.3209,1.3209,1.1666)^{\mathrm{T}},\\
V_r(A^1_2)=(0.5397,0.5397,0.5397,0.5397)^{\mathrm{T}},\\
V_r(A^1_3)=(0.2294, 0.2294,0.2294,0.2294)^{\mathrm{T}},\\
V_r(A^1_4)=(0.4190,0.4190,0.4190.0.4190)^{\mathrm{T}},\\
\end{array}
$$
Hence
$$
\begin{array}{ll}
A^1_1=1000\begin{bmatrix}1.1666&1.3209\\1.3209&1.1666\end{bmatrix},&
A^1_2=0.5397J_2,\\
A^1_3=0.2294J_2,&A^1_4=0.4190J_2,
\end{array}
$$
where
$$
J_2=\begin{bmatrix}
1&1\\1&1\end{bmatrix}.
$$
$$
ERROR=\|A-A^1_1\otimes A^1_2\otimes A^1_3\otimes A^1_4\|^2_F=345408.
$$
\item[] Step 2:
Setting $V_1=V_0-\ltimes_{s=1}^4 V_r(A^1_s)$ and using  Algorithm \ref{a3.3.1} to $V_1$ yield
$$
\begin{array}{ll}
A^2_1=-72.74J_2,&
A^2_2=\begin{bmatrix}4.8642&-4.2920\\4.2920&-4.8642\end{bmatrix},\\
A^2_3=0.2294J_2,&A^2_4=0.4190J_2.
\end{array}
$$
$$
ERROR=\|A_1-A^2_1\otimes A^2_2\otimes A^2_3\otimes A^2_4\|^2_F=82240.
$$
\item[] Step 3:
Setting $V_2=V_1-\ltimes_{s=1}^4 V_r(A^2_s)$ and using  Algorithm \ref{a3.3.1} to $V_2$ yield
$$
\begin{array}{ll}
A^3_1=10\begin{bmatrix}0.34&-0.34\\0.34&-0.34\end{bmatrix},&
A^3_2=9.9798J_2\\
A^3_3=\begin{bmatrix}-1.1957&1.0551\\-1.0551&1.1957\end{bmatrix},&
A^3_4=0.4190J_2.
\end{array}
$$
$$
ERROR=\|A_2-A^3_1\otimes A^3_2\otimes A^3_3\otimes A^3_4\|^2_F=16448.
$$
\item[] Step 4:
Setting $V_3=V_2-\ltimes_{s=1}^4 V_r(A^3_s)$ and using  Algorithm \ref{a3.3.1} to $V_3$ yield
$$
\begin{array}{ll}
A^4_1=\begin{bmatrix}-0.8027&0.8027\\-0.8027&0.8027\end{bmatrix},&
A^4_2=1.0110J_2\\
A^4_3=-5.9857J_2&A^4_4\begin{bmatrix}-1.7498&1.5439\\-1.5439&1.7498\end{bmatrix}.
\end{array}
$$
$$
ERROR=\|A_3-A^4_1\otimes A^4_2\otimes A^4_3\otimes A^4_4\|^2_F=1.5799*10^{-25}.
$$
\end{itemize}

\begin{itemize}
\item[(8)], $d=8$ decomposition:
\end{itemize}
Find $U^k_i\in {\cal M}_{2,1}$, $V^k_i\in {\cal M}_{1,2}$, $i\in [1,4]$, and $k$ is the smallest one such that
\begin{align}\label{4.2.2}
A=\dsum_{i=1}^kU^k_1\otimes V^k_1\otimes U^k_2\otimes V^k_2\otimes U^k_3\otimes V^k_3+U^k_4\otimes V^k_4.
\end{align}

We use indirect decomposition. We need an auxiliary result, which is straightforward verifiable.

\begin{prp}\label{p4.2.1}
 Let
$$
M=\begin{bmatrix}
a&b\\c&d\end{bmatrix}
$$
Then
\begin{itemize}
\item[(i)] $a\neq 0$:
\begin{align}\label{4.2.3}
M=\begin{bmatrix}1\\c/a\end{bmatrix}\begin{bmatrix}a&b\end{bmatrix}+
\begin{bmatrix}0\\1\end{bmatrix}\begin{bmatrix}0&d-cb/a\end{bmatrix}.
\end{align}
\item[(ii)] $a=0,~b\neq 0$:
\begin{align}\label{4.2.4}
M=\begin{bmatrix}1\\d/b\end{bmatrix}\begin{bmatrix}0&b\end{bmatrix}+
\begin{bmatrix}0\\1\end{bmatrix}\begin{bmatrix}c&0\end{bmatrix}.
\end{align}
\item[(iii)] $a=0,~b=0$:
\begin{align}\label{4.2.5}
M=\begin{bmatrix}0\\1\end{bmatrix}\begin{bmatrix}c&d\end{bmatrix}.
\end{align}
\end{itemize}
\end{prp}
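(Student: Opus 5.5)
Proposition \ref{p4.2.1} is a direct verification: in each of the three cases I multiply out the column-times-row products on the right, add them, and compare the result entry by entry with $M$. A column vector times a row vector is an ordinary outer product here; the STP reduces to the usual product because the inner dimensions agree, each being $1$. The case split mirrors a pivoting step: case (i) pivots on $a$, case (ii) pivots on $b$ when $a=0$, and in case (iii) the first row vanishes.

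\emph{Case (i), $a\neq 0$.} We have
$$
\begin{bmatrix}1\\c/a\end{bmatrix}\begin{bmatrix}a&b\end{bmatrix}=\begin{bmatrix}a&b\\c&cb/a\end{bmatrix},
\qquad
\begin{bmatrix}0\\1\end{bmatrix}\begin{bmatrix}0&d-cb/a\end{bmatrix}=\begin{bmatrix}0&0\\0&d-cb/a\end{bmatrix}.
$$
Their sum is $\begin{bmatrix}a&b\\c&d\end{bmatrix}=M$, so (\ref{4.2.3}) holds. This is the rank-one elimination step: the first term reproduces the first row and first column of $M$ exactly, and the second term is the Schur-complement correction in position $(2,2)$.

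\emph{Case (ii), $a=0$, $b\neq 0$.} The first term is $\begin{bmatrix}0&b\\0&d\end{bmatrix}$ and the second is $\begin{bmatrix}0&0\\c&0\end{bmatrix}$. Their sum is $\begin{bmatrix}0&b\\c&d\end{bmatrix}=M$, which gives (\ref{4.2.4}).

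\emph{Case (iii), $a=b=0$.} The single product is $\begin{bmatrix}0&0\\c&d\end{bmatrix}=M$, which gives (\ref{4.2.5}).

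The only point needing care is that the divisions are legitimate: $c/a$ appears only when $a\neq 0$, and $d/b$ only when $a=0$ and $b\neq 0$. The three cases together cover all $(a,b)$, so every $M\in{\cal M}_{2\times 2}$ is written as a sum of at most two products $U\otimes V$ with $U\in{\cal M}_{2\times 1}$ and $V\in{\cal M}_{1\times 2}$, since $U\otimes V=UV$ for a column $U$ and a row $V$. This is the form needed later for (\ref{4.2.2}).
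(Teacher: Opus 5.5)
Your proposal is correct and takes the same approach as the paper, which only remarks that the identities are "straightforward verifiable." Your entry-by-entry expansion of the outer products in each of the three cases, with division by $a$ or $b$ only when that entry is nonzero, is exactly that verification.
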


Using Proposition \ref{p4.2.1} and the result obtained in $d=4$ decomposition, we have
\begin{itemize}
\item[(i)]
$$
A^1_1=1000\begin{bmatrix}1.1666&1.3209\\1.3209&1.1666\end{bmatrix}=U^1_1\otimes V^1_1+U^2_1\otimes V^2_1,
$$
where
$$
\begin{array}{l}
U^1_1=1000\begin{bmatrix}1\\1.1323\end{bmatrix},\\
V^1_1=\begin{bmatrix}1.1666&1.3209\end{bmatrix},\\
U^2_1=1000\begin{bmatrix}0\\1\end{bmatrix},\\
V^2_1=\begin{bmatrix}1.1666&-0.3290\end{bmatrix}.\\
\end{array}
$$

$$
A^1_2=0.5397J_2=U^1_2\otimes V^1_2=U^2_2\otimes V^2_2,
$$
where
$$
\begin{array}{l}
U^1_2=U^1_2=0.5397\begin{bmatrix}1\\1\end{bmatrix},\\
V^1_2=V^1_2=\begin{bmatrix}1&1\end{bmatrix}.\\
\end{array}
$$

$$
A^1_3=0.2294J_2=U^1_3\otimes V^1_3=U^2_3\otimes V^2_3,
$$
where
$$
\begin{array}{l}
U^1_3=U^2_3=0.2294\begin{bmatrix}1\\1\end{bmatrix},\\
V^1_3=V^2_3=\begin{bmatrix}1&1\end{bmatrix}.\\
\end{array}
$$

$$
A^1_4=0.4190J_2=U^1_4\otimes V^1_4=U^2_4\otimes V^2_4,
$$
 where
$$
\begin{array}{l}
U^1_4=U^2_4=0.4190\begin{bmatrix}1\\1\end{bmatrix},\\
V^1_4=V^2_4=\begin{bmatrix}1&1\end{bmatrix}.\\
\end{array}
$$
\item[(ii)]

$$
A^2_1=-72.74J_2=U^3_1\otimes V^3_1=U^4_1\otimes V^4_1, where
$$
$$
\begin{array}{l}
U^3_1=U^4_1=-72.741\begin{bmatrix}1\\1\end{bmatrix},\\
V^3_1=V^4_1=\begin{bmatrix}1&1\end{bmatrix}.\\
\end{array}
$$

$$
A^2_2=\begin{bmatrix}4.8642&-4.2920\\4.2920&-4.8642\end{bmatrix}=U^3_2\otimes V^3_2+U^4_2\otimes V^4_2,
$$
 where
$$
\begin{array}{l}
U^3_2=\begin{bmatrix}1\\0.8824\end{bmatrix},\\
V^3_2=\begin{bmatrix}4.8642&-4.2920\end{bmatrix},\\
U^4_2=\begin{bmatrix}0\\1\end{bmatrix},\\
V^4_2=\begin{bmatrix}0&-1.0771\end{bmatrix}.\\
\end{array}
$$

$$
A^2_3=0.2294J_2=U^5_2\otimes V^5_2=U^6_2\otimes V^6_2,
$$
 where
$$
\begin{array}{l}
U^5_2=U^6_2=0.2294\begin{bmatrix}1\\1\end{bmatrix},\\
V^5_2=V^6_2=\begin{bmatrix}1&1\end{bmatrix}.\\
\end{array}
$$

$$
A^2_4=0.4190J_2=U^7_2\otimes V^7_2=U^8_2\otimes V^8_2,
$$
 where
$$
\begin{array}{l}
U^7_2=U^8_2=0.4190\begin{bmatrix}1\\1\end{bmatrix},\\
V^7_2=V^8_2=\begin{bmatrix}1&1\end{bmatrix}.\\
\end{array}
$$

\item[(iii)]

$$A^3_1=10\begin{bmatrix}0.34&-0.34\\0.34&-0.34\end{bmatrix}=U^1_3\otimes V^1_3=U^{2}_3\otimes V^{2}_3,
$$
 where
$$
\begin{array}{l}
U^1_3=U^2_3=3.4\begin{bmatrix}1\\1\end{bmatrix},\\
V^1_3=V^2_3=\begin{bmatrix}1&-1\end{bmatrix},\\
\end{array}
$$

$$
A^3_2=9.9798J_2=U^{3}_3\otimes V^{3}_3=U^{4}_3\otimes V^{4}_3,
$$
 where
$$
\begin{array}{l}
U^{3}_3=U^4_3=9.9798\begin{bmatrix}1\\1\end{bmatrix},\\
V^{3}_3=V^4_3=\begin{bmatrix}1&1\end{bmatrix}.\\
\end{array}
$$

$$
A^3_3=\begin{bmatrix}-1.1957&1.0551\\-1.0551&1.1957\end{bmatrix}=U^{5}_3\otimes V^{5}_3+U^{6}_3\otimes V^{6}_3,
$$
 where
$$
\begin{array}{l}
U^{5}_3=\begin{bmatrix}1\\0.8824\end{bmatrix},\\
V^{5}_3=\begin{bmatrix}-1.1957&1.0551\end{bmatrix},\\
U^{6}_3=\begin{bmatrix}0\\1\end{bmatrix},\\
V^{6}_3=\begin{bmatrix}0&0.2647\end{bmatrix}.\\
\end{array}
$$

$$A^3_4=0.4190J_2=U^{7}_3\otimes V^{7}_3=U^{8}_3\otimes V^{8}_3,
$$
 where
$$
\begin{array}{l}
U^{7}_3=U^8_3=0.4190\begin{bmatrix}1\\1\end{bmatrix},\\
V^{8}_3=V^8_3=\begin{bmatrix}1&1\end{bmatrix}.\\
\end{array}
$$

\item[(iv)]

$$A^4_1=\begin{bmatrix}-0.8027&0.8027\\-0.8027&0.8027\end{bmatrix}=U^1_4\otimes V^1_4=U^2_4\otimes V^2_4,
$$
 where
$$
\begin{array}{l}
U^{1}_4=U^2_4=-0.8027\begin{bmatrix}1\\1\end{bmatrix},\\
V^{1}_4=V^2_4=\begin{bmatrix}1&-1\end{bmatrix}.\\
\end{array}
$$

$$A^4_2=1.0110J_2=U^3_4\otimes V^3_4=U^4_4\otimes V^4_4,
$$
 where
$$
\begin{array}{l}
U^{3}_4=U^4_4=1.0110\begin{bmatrix}1\\1\end{bmatrix},\\
V^{3}_4=V^4_4=\begin{bmatrix}1&1\end{bmatrix}.\\
\end{array}
$$

$$A^4_3=-5.9857J_2=U^5_4\otimes V^5_4=U^6_4\otimes V^6_4,
$$
 where
$$
\begin{array}{l}
U^{5}_4=U^6_4=-5.9857\begin{bmatrix}1\\1\end{bmatrix},\\
V^{5}_4=V^6_4=\begin{bmatrix}1&1\end{bmatrix}.\\
\end{array}
$$

$$A^4_4=\begin{bmatrix}-1.7498&1.5439\\-1.5439&1.7498\end{bmatrix}=U^7_4\otimes V^7_4+U^8_4\otimes V^8_4,
$$
 where
$$
\begin{array}{l}
U^{7}_4=\begin{bmatrix}1\\0.8823\end{bmatrix},\\
V^{7}_4=\begin{bmatrix}-1.7498&1.5439\end{bmatrix},\\
U^{8}_4=\begin{bmatrix}0\\1\end{bmatrix},\\
V^{8}_4=\begin{bmatrix}0&0.3876\end{bmatrix}.\\
\end{array}
$$
\end{itemize}

Finally, we have
$$
A=\dsum_{k=1}^8U^k_1\otimes V^k_1\otimes U^k_2\otimes V^k_2\otimes U^k_3\otimes V^k_3\otimes U^k_4\otimes V^k_4.
$$

\begin{rem}\label{r4.2.2}
\begin{itemize}
\item[(i)] It is obvious that when $d=2$, $\rank_t(A)=2$. According to the above computation we conclude that when $d=4$ $\rank_t(A)\leq 4$, when $d=8$ $\rank_t(A)\leq 8$.
Our conjecture is: when $d=4$ $\rank_t(A)=4$, when $d=8$ $\rank_t(A)=8$. The conjecture comes from a large number of numerical computations.

\item[(ii)] For finite sum decomposition, if the SVA is used, we may meet the local minimum stationary points.
There are two kinds of local minimum stationary points: (a) non-zero stationary point, in this case, the finite sum decomposition by using SVA successively can still going on; (b) zero stationary point, i.e., $\ltimes_{s=1}^dx_s^*=0$. Then the algorithm got stuck in an infinite loop.

When we use gradient-based algorithm for $d=8$ finite sum decomposition, we met case (b).

\item[(iii)] Comparing with singular-value based decomposition, the gradient-based decomposition has the following advantages: (1) Computational Complexity: The computational complexity of the gradient-based decomposition is $O(n)$, while the computational complexity of the singular-value based decomposition is at least  $2^{O(n)}$ (for precise solution) or $O(n^2log(1/\epsilon))$ (for approximate solution). (2) Computational Accuracy: From numerical examples it is easy to find that the gradient-based decomposition can provide much more accurate decomposition than singular-value based decomposition. (3) The SVA has no dimensional restriction, that is, it is applicable to ${\cal A}\in \R^{n_1\times \cdots \times n_d}$ with $n_i\neq n_j$, $i\neq j$. While the singular-value based decomposition needs each sub-blocks are square. An obvious disadvantage of SVA is for ill-conditioned data, the algorithm may got stuck in a stationary point which is not least square solution. Resetting initial values (might be many times) is then necessary.

\end{itemize}
\end{rem}

\section{Concludin Remarks}

The main contribution of this paper is to propose an efficient algorithm called the SVA, which is used for solving least square approximation of hypermatrices. It has some obvious advantages: (1) linear computational complexity; (2) high accuracy (3) no restriction on either order or dimension. Its main weakness is that the algorithm  may reach  a stationary point, which is only with a local minimum value, depending on the chosen initial values.
Since the computational complexity is very low, it is very likely that a Monte Carlo-like searching can find a least square stationary point.

Note that using SVA to solve the finite sum KPD problem does not ensure the sum number is minimum, i.e., it might be larger than the tensor rank.

In one word, the SVA is a very efficient algorithm for KPD of hypermatrices.

%
% \begin{IEEEbiography}[{\includegraphics[width=1in,height=1.25in,clip,keepaspectratio]{daizhancheng.png}}]
%	{Daizhan Cheng} graduated from Tsinghua University in 1970, received M.S. from Graduate School, Chinese Academy of Sciences in 1981, and Ph.D. from Washington University, St. Louis, in 1985. Since 1990, he is a professor with the Institute of Systems Science, AMSS, Chinese Academy of Sciences. He is the author/coauthor of 17 academic books, over 300 Journal Papers, and over 170 Conference Papers. He is IEEE Fellow (2006-), IFAC Fellow (2008-). He was a member of IEEE CSS Board of Governors (2009, 2015), and IFAC Council Member (2011-2014). He received the Second National Natural Science Award of China twice (in 2008 and 2014), the Outstanding Science and Technology Achievement Prize of CAS (2015), and the Automatica Best Paper Award (2008-2010), bestowed by IFAC.
%	He is the founder of the semi-tensor product of matrices.
%\end{IEEEbiography}
%


\begin{thebibliography}{00}
%
\bibitem{arz18} D.A. Arzanagh, G. Michailidis, Fast randomized algorithms for t-product based tensor operations and decompositions with applications to imaging data, {\it SIAM J. Imaging Sci.}, Vol. 11, No. 4, 2629-2664, 2018.
%
\bibitem{ayo24} B.A.M. Ayoub, J. Mitrovi\'{c}, M. Granitzer, KRONY-PT: GPT2 compressed with Kronecker products,
arXiv:2412.12351v1 [cs.LG] 16 Dec 2024.
%
\bibitem{bat17} K. Batselier, N. Wong, A constructive-degree Kronecker product decomposition of tensors, {\it Linear Algebra Appl.}, Vol. 24, e2097, 2017.
%
\bibitem{cai23} C. Cai, R. Chen, H. Xiao, Hybrid Kronecker product decomposition and approximation, {\it J. Comp. Graph. Stat.}, Vol. 32, No. 3, 838-852, 2023.
%
\bibitem{car70} J.D. Carroll, J. Chang, Analysis of individual differences in multidimansional scaling via an N-way generalization of ``Echart-Young" decomposition. {\it Psychometrika}, 35, 283-319, 1970.
%
\bibitem{cha00} F. Charles, C.F. Van Loan, The ubiquitous Kronecker product, {\it J. Comput. Appl. Math.}, Vol. 123, 85-100, 2000.
%
\bibitem{che12}  D. Cheng, H. Qi, Y. Zhao, {\it An Introduction to Semi-tensor Product of Matrices and Its Applications}, World Scientific, Singapore, 2012.
%
\bibitem{che19} D. Cheng, {\it From Dimension-Free Matrix Theory to Cross-Dimensional Dynamic Systems}, Elsevier, London, 2019.
%
\bibitem{chen24} C. Chen, {\it Tensor-Based Dynamical Systems - Theory and Applications},  Switzerland, Springer, 2024.
%
\bibitem{che25} D. Cheng, Z. Ji,  On Universal eigenvalues and eigenvectors of hypermatrices, {\it J. Franklin Inst.}, 362(17), 108126, 2025.
%
\bibitem{chepr} D. Cheng, A solution for Kronecker product decomposition, http://arXiv.org/abs/2509.22373, ({\it Sci. China, Inform. Sci.}, under revision.)
%
\bibitem{col62} A.R. Collar, On centrosymmetric and centroskew matrices, {\it Quart J. Mech. Appl. Math.}, 15(3), 265-281, 1962.
%
\bibitem{cos08} R. Costantini, L. Sbaiz, S. S\"{u}sstrunk, Higher order SVD analysis for dynamic texture synthesis, {\it IEEE Trans. Image Proces.}, 17(1), 2008.
%
\bibitem{dog22} L.M. Dogariu, J. Benesty, C. Paleologu, S. Ciochin\u{a}, Identification of room acoustic impulse responses via Kronecker product decompositions, {\it IEEE/ACM Trans. Audio, Speech, Language. Proc.}, Vol 30, 2828-2841, 2022.
%
\bibitem{eda21} A. Edalati, M. Tahaei, A. Rashid, V.P. Nia, J.J. Clark, M. Rezagholizadeh, Kronecker decomposition for GPT compression, arXiv:2110.08152v1 [cs.CL] 15 Oct 2021.
%
\bibitem{fen24} L. Feng, G. Yang, Deep Kronecker network, {\it Biometrika}, Vol. 111, No. 2, 707-714, 2024.
%
\bibitem{gar18} C. Garvey. C. Meng, J.G. Nagy, Singular value decomposition approximation via Kronecker symmations for imaging applications, {\it SIAM J. Matrix Anal. Appl.}, 39(4), 1836-1857, 2018.
%
\bibitem{gra10} L. Grasedyck, Hierarchical singular value decomposition of tensors, {\it SIAM. J. Matrix Anal. Appl.}, 31(4), 2029-2054, 2010.
%
\bibitem{ham22} M.G.A. Hameed, M.S. Tahaei, A. Mosleh, V.P. Nia, Convolutional neural network compression through generalized Kronecker product decomposition, {\it 36 AAAI Conf. on AI}, (AAAI-22), 771-779, 2022.
%
\bibitem{har70} R.A. Harshman, Foundations of the PARAFAC procedure: Model and conditions for an `explanatory' multi-mode factor analysis, {\it UCLA Workshop Papers} in phonetics, 16, 1-84, 1970.
%
\bibitem{joe24} S. Joel, S.K. Yadav, N.V. George, Adaptive low-rank DOA Estimation using complex Kronecker product decomposition, {\it IEEE Trans. Vehic. Tech.}, Vol. 73, No. 7, 2024.
%
\bibitem{kil13} M.E. Kilmer, K. Braman,  N. Hao, R.C. Hoover, Third-order tensors as operators on matrices: A theoretical and computational framework with applications in imaging, {\it SIAM J. Matrix Anal. Appl.}, Vol. 34, No. 1, 148-172, 2013.
%
\bibitem{lim13}		L. Lim, Tensors and Hypermatrices, in L. Hogben (Ed.) {\it Handbook of Linear Algebra} (2nd ed.), Chapter 15,
		Chapman and Hall/CRC.https://doi.org/10.1201/b16113, 2013.
%
\bibitem{mar04} C.D.M. Martin, Tensor decompositions workshp discussion notes: {\it American Institute of Mathematics}, http://www.aimath.org/WWN/tensordecomp/tensordecomp.pdf(accessed at August,2028),July 2004.
%
\bibitem{one25} A. Oneto, E. Ventura, Ranks of tensors: geometriy and applications, {\it Bollettino dell'Unione Matematica Italiana}, Vol. 18, 751-778, 2025.
%
\bibitem{pal18} C. Paleologu, J. Benesty, S. Ciochin\u{a}, Linear system identification based on a Kronecker product decomposition, {\it IEEE/ACM Trans.    Audio, Speech, Language. Proc.}, Vol 26, No. 10, 1793-1808, 2018.
%
\bibitem{tah21} M.S. Tahaci, E. Charlais, V.P. Nia, KroneckerBERT: Learning kronecker decomposition for pre-trained language models via knowledge distillation, arXiv:2109.06243v1, 2021.
%
\bibitem{van93} C.F. Van Loan, N. Pitsianis, Approximation with Kronecker product. In: M.S., Golub, G.H., De Moor, B.L.R. (eds) {\it Linear Algebra for Lare Scale and Real-TimeApplications}, NATO ASI Series, Vol. 232,  Springer, Dordrecht, 293-314, 1993.
%
\bibitem{van00}
C.F. van Loan, The ubiquitous Kronecker product,  {\it J. Comp. Appl. Math.}, 2000, 123, 85-100.
%
\bibitem{wan21} X. Wang, G. Huang, J. Benesty, J. Chen, I. Cohen, Time difference of arrival estimation based on a Kronecker product decomposition, {\it IEEE Sign. Proc. Lett.}, Vol. 28, 51-55, 2021.
%
\bibitem{wan22} Y. Wang, Y. Yang, Hot-SVD: higher order t-singular value decomposition for tensors based on tensor-tensor product, {\it Comput. Appl. Math.}, 41:394. 2022.
%
\bibitem{wei23} X. Wei, H. Li, G. Zhao, Kronecker product decomposition of Boolean matrix with application to topological structure analusis of Boolean networks, {\it Math. Model. Contr.}, Vol. 3, No. 4, 306-315, 2023.
%
\bibitem{wu23} Y. Wu, A new method of kronecker product decomposition, {\it Hindawi, J. Math.}, ID 9111626, 12 pages, 2023.
%
\bibitem{wu23b} S. Wu, L. Feng, Sparse Kronecker product decomposition: a general framework of signal region detection in image regression, {\it J. Royal Stat. Society Serier B: Statist. Methodol.}, Vol. 85, 783-809, 2023.
%
 \bibitem{zhu24} D. Zhu, Z. Zuo, M.M. Khalili, Training block-wise sparse models using Kronecker product decomposition, {\it 38th Conf. Neural Inform. Process. Sys.} NeurIPS, 1-12, 2024.
\end{thebibliography}
\end{document}